\documentclass[10pt,a4paper,oneside]{article}
\date{}
\usepackage[ansinew]{inputenc}
\usepackage{array}
\usepackage{color}
\usepackage{amsmath}
\usepackage{amsxtra}
\usepackage{amstext}
\usepackage{amssymb}
\usepackage{graphicx}
\usepackage{cite}
\usepackage{latexsym}
\usepackage{amsmath,amsfonts,amssymb,amsthm,epsfig,epstopdf,titling,url,array}
\usepackage[a4paper, total={6.3in, 9.4in}]{geometry}
\usepackage{titlesec} % Allows customization of titles
\makeatletter
\numberwithin{equation}{section}

\newcommand*{\rom}[1]{\expandafter\@slowromancap\romannumeral #1@}
\newtheorem{cor}{Corollary}[section]
\theoremstyle{definition}
\newtheorem{defn}{Definition}[section]

\newtheorem{rem}{Remark}[section]

\newtheorem{thm}{Theorem}[section]
\newtheorem{lem}{Lemma}[section]

\newcommand{\btheorem}{\begin{theorem}}
	\newcommand{\etheorem}{\end{theorem}}
\begin{document}
		\begin{center}
			\large{\textbf{Fekete-Szeg{\"o} inequality for  Classes of $(p,q)$–Starlike and $(p,q)$–Convex Functions } }

	\vspace{8mm}
	
			Nusrat Raza  \\
	Mathematics Section, Women's College, AMU, Aligarh-202002,\\[0pt]
	nraza.maths@gmail.com\\[0pt]

	and\\
	\vspace{2mm} 
	
		Eman S. A. AbuJarad\\
	Department of Mathematics, AMU, Aligarh-202002,\\[0pt]
	emanjarad2@gmail.com\\[0pt]
	
and \\

{\large Gautam Srivastava\\[0pt]
	Department of Mathematics and Computer Science, Brandon University, 270 18th
	Street, Brandon, Canada, R7A 6A9\\[0pt]
	srivastavag@brandonu.ca\\[0pt]
}

{\large Research Center for Interneural Computing, China Medical University,
	Taichung 40402, Taiwan, Republic of China }

{\large and \\[0pt]
}

{\large H. M. Srivastava \\[0pt]
	Department of Mathematics and Statistics, University of Victoria, Victoria,
	British Columbia V8W 3R4, Canada,\\[0pt]
	harimsri@math.unvi.ca\\[0pt]
}

{\large Department of Medical Research, China Medical University Hospital,
	China Medical University, Taichung 40402, Taiwan, Republic of China }

{\large and \\[0pt]
}
{\large Mohammed H AbuJarad \\[0pt]
	Department of Statistics and Operations Research, AMU, Aligarh-202002,\\[0pt]
	m.jarad@gu.edu.ps\\[0pt]
}

		\vspace{2mm}
		
	\end{center}
	\vspace{5mm}
	{\bf{Abstract:}}
	  In the present paper, the new generalized classes of  $(p,q)$-starlike and $(p,q)$-convex functions are introduced by using the $(p,q)$-derivative operator. Also, the $(p,q)$-Bernardi integral operator for analytic function is defined in the open unit disc $\mathbb{U}=\left\lbrace z\in \mathbb{C}:|z|<1\right\rbrace $ . Our aim for these classes is to investigate the Fekete-Szeg{\"o} inequalities. Moreover, Some special cases of the established results  are  discussed. Further, certain applications of the main results are obtained by applying the  $(p,q)$-Bernardi integral operator .\\
	  \\
	  {\bf{Keywords:}} $(p,q)$-starlike functions, $(p,q)$-convex functions, Fekete-Szeg{\"o} inequality, $(p,q)$-Bernardi integral operator .\\
	  	\\
	  	\textbf{Mathematics Subject Classification Code:} 30C45.

 	\section{Introduction}
 	
The $q$-analysis is a generalization of the ordinary analysis with out using the limit notation. The first application and usage of the $q$-calculus was  introduced by Jackson~\cite{Jack} and \cite{ackso}. Moreover, several applications in various fields of Mathematics and physics (see for details \cite{sofonea2008some},\cite{srivastava2011some}). Recently, there is an extension of $q$-calculus, denoted by $(p,q)$-calculus which obtained by substituting $q$ by $q/p$ in $q$-calculus. The $(p,q)$-integer was considered by Chakrabarti and Jagannathan \cite{chakrabarti1991p}, see also, \cite{Ber},\cite{bukweli2013quantum} and \cite{sadjang2013fundamental} . The two important geometric properties of analytic functions are starlikeness and convexity. So that, there are many publications in Geometric Function Theory by using the $q$-differential operator, for example, A generalization of starlike functions $\mathcal{S}^{*}$ were investigated by Ismail et al. \cite{Ismail} . Further, Close-to-convexity of a certain family of $q$-Mittag-Leffler functions were studied by \cite{Bansa}. Also, the coefficient inequality $q$-starlike functions were discussed by \cite{uccar2016coefficient} . Recently, Coefficient estimates of $q$-starlike and $q$-convex functions were studied by \cite{seoudy2016coefficient}. Further, new subclasses of analytic functions associated with $q$-differential operators were introduced and discussed, see for example \cite{Aldweby}, \cite{frasin2017new}, \cite{Seoudy}, \cite{Mahmo}\cite{Sriv},\cite{Srivastava} and \cite{uccar2016coefficient}. Motivated by an emerging idea of $(p,q)$-analysis as a generalization of $q$-analysis, in this paper, we extend the idea of $q$-starlikeness and $q$-convexity to $(p,q)$-starlikeness and $(p,q)$-convexity, then we will obtain the  Fekete-Szeg{\"o} inequalities for these classes. Also, we will apply these results on the introduced $(p,q)$-Bernardi integral operator.  \\

 We recall some basic notations and definitions from $(p,q)$-calculus, which are used in this paper .\\
 	
 	 The $(p,q)$-derivative of the function $f$ is defined as\cite{acar2016kantorovich}:
 	\begin{equation}\label{2}
 	D_{p,q}f(z)=\dfrac{f(pz)-f(qz)}{(p-q)z} \hspace{15mm} (z\neq0; \ 0<q< p\leq 1);\\
 	\end{equation}
 	
 	From equation (\ref{2}), it is clear that if $f$ and $g$ are the two functions, then 
 	\begin{equation}\label{1*}
	D_{p,q}\left( f(z)+g(z)\right) = 	D_{p,q}f(z)+	D_{p,q}g(z)
 	\end{equation}
 	and 
 	\begin{equation}\label{2*}
	D_{p,q}\left( cf(z)\right) =	c D_{p,q}f(z),
 	\end{equation}
 	where $c$ is constant.\\
 	
 	We note that $D_{p,q}f(z)\longrightarrow f^{\prime}(z)$ as $p=1$ and $q\longrightarrow 1-$, where $f^{\prime}$ is the ordinary derivative of the function $f$.\\
 	
 	  In particular, using equation (\ref{2}), the $(p,q)$-derivative of the function $h(z)=z^{n}$ is as follows :
 	\begin{equation}\label{106}
 	D_{p,q}h(z)=[n]_{p,q}z^{n-1}, \hspace{15mm} 
 	\end{equation}
 	where $ [n]_{p,q}$ denotes the $(p,q)$-number and is given as:
 	\begin{equation}\label{3}
 	[n]_{p,q}=\dfrac{p^{n}-q^{n}}{p-q}\hspace{15mm}( 0<q<p\leq 1).\\
 	\end{equation}
 	
 	Since, we note that $[n]_{p,q}\longrightarrow n$ as $p=1$ and $q\longrightarrow 1-$, therefore in view of equation (\ref{106}), $D_{p,q}h(z)\longrightarrow h^{\prime}(z)$ as $p=1$ and $q\longrightarrow 1-$, where $h^{\prime}(z)$ denotes  the ordinary derivative of the function $h(z)$ with respect to $z$. \\
 	
 	Also, the $(p,q)$-integral of the function $f$ on $[0,z]$ is defined as \cite{kang2016some} :
 
 	\begin{equation*}%\label{5}
 	\int_{0}^{z} f(t) d_{p,d}t = (p-q)z \sum_{k=0}^{\infty}\dfrac{q^{k}}{p^{k+1}}f \left( \dfrac{q^{k}}{p^{k+1}}z\right),\\
 \end{equation*}
 where $ \left| \dfrac{q}{p}\right| <1$ and $0<q<p\leq 1$.\\

  In particular, the $(p,q)$-integral of the function $h(z)=z^{n}$ is given by
\begin{equation}\label{4}
\int_{0}^{z}h(t)d_{p,d}t=\dfrac{z^{n+1}}{[n+1]_{p,q}}, \hspace{15mm} 
\end{equation}
where $n \neq -1$ and $ [.]_{p,q}$ is given by equation (\ref{3}).\\

Again, since $[n+1]_{p,q}\longrightarrow n+1$ as $p=1$ and $q\longrightarrow 1-$, therefore for the same choices of $p$ and $q$, equation (\ref{4}) reduces to $\int_{0}^{z}h(t)dt= \dfrac{z^{n+1}}{n+1}$,  which is  the ordinary integral of the function $h(z)$ on  $[0,z]$.\\

 In this paper, we consider the class $\mathcal{A}$ consisting of functions of the following form :
\begin{equation}\label{1}
f(z)=z+\sum_{n=2}^{\infty}a_{n}z^{n}
\end{equation}
and analytic in the open unit disc $\mathbb{U}=\left\lbrace z\in \mathbb{C}:|z|<1\right\rbrace $.\\

Also, using equations (\ref{1*}), (\ref{2*}) and (\ref{106}), we get the $(p,q)$-derivative of the function $f$, given by equation (\ref{1}) as :
\begin{equation}\label{105}
	D_{p,q}f(z)=1+\sum_{n=2}^{\infty}[n]_{p,q}a_{n}z^{n-1} \hspace{15mm} (0<q<p\leq 1)
\end{equation}
where $ [n]_{p,q}$ is given by equation (\ref{3}).\\

For the analytic functions $f$ and $g$ in $\mathbb{U}$, we say that the function $g$ is subordinate to  $f$ in $\mathbb{U}$ \cite{miller2000differential}, and write 
\begin{center}
	$g(z)\prec f(z) \ $  or  $ \ g\prec f$ ,
\end{center}
if there exists a Schwarz function $w$, which is analytic in $\mathbb{U}$ with 
\begin{center}
	$ \ \ \ \ \ \ \ \ \ \ \ w(0)=0$ and $|w(z)|<1, \ \ \ \ \ \ \ \ \ \ \  $
\end{center}
such that 
\begin{equation}\label{0}
	 g(z)=f(w(z))\hspace{5mm} (z\in \mathbb{U}).\\
\end{equation}

\vspace{5mm}

Ma-Minda \cite{Ma} defined the classes of starlike and convex functions, denoted by $\mathcal{S}^{*}(\phi)$ and  $\mathcal{C}(\phi)$, respectively, by using the subordination principle between certain analytic functions. These subclasses are defined as follows:
\begin{equation} \label{6}
\mathcal{S}^{*}(\phi)=\left\lbrace f\in \mathcal{A}:  \dfrac{zf^{\prime}(z)}{f(z)} \prec \phi(z)\right\rbrace \hspace{15mm}
\end{equation}
and
\begin{equation}\label{7}
\mathcal{C}(\phi)=\left\lbrace f\in \mathcal{A} : \left(  1+\dfrac{zf^{\prime\prime}(z)}{f^{\prime}(z)}\right)  \prec  \phi(z) \right\rbrace  ,
\end{equation}
where  the function $\phi(z)$ is analytic in $\mathbb{U}$ with $ \Re (\phi(z))>0$, $\phi(0)=1$ and $\phi^{\prime}(0)>0$.
It is clear that $\mathcal{S}^{*}(\phi)$ and  $\mathcal{C}(\phi)$ are the subclasses of $\mathcal{A}$.\\

The classes of $q$-starlike and  $q$-convex functions, denoted by $\mathcal{S}^{*}_{q}(\phi)$ and $\mathcal{C}_{q}(\phi)$, respectively, are defined by using the subordination principle as \cite{cetinkayafekete}:
\begin{equation}\label{8}
\mathcal{S}^{*}_{q}(\phi)=\left\lbrace f\in \mathcal{A}:  z\dfrac{D_{q}f(z)}{f(z)} \prec \phi(z)\right\rbrace \hspace{10mm}
\end{equation}
and 
\begin{equation}\label{9}
\mathcal{C}_{q}(\phi)=\left\lbrace f\in \mathcal{A}:  \dfrac{D_{q}(zD_{q}f(z))}{D_{q}f(z)} \prec \phi(z)\right\rbrace,\\
\end{equation}
where  the function $\phi(z)$ is analytic in $\mathbb{U}$ with $ \Re (\phi(z))>0$, $\phi(0)=1$ and $\phi^{\prime}(0)>0$. These classes are the subclasses of $\mathcal{A}$.\\

The Feteke-Szeg{\"o} problem \cite{fekete1933bemerkung} is to find the coefficients estimates for second and third coefficients of functions in any class of analytic function having a specified geometric property. In this paper, we introduce the classes of  $(p,q)$-starlike and $(p,q)$-convex functions  by using the $(p,q)$-derivative in terms of the subordination principle . Also, we find the Fekete-Szeg{\"o} inequalities which is obtained by the 
maximizing the absolute value of the coefficient
$|a_{3}-a^{2}_{2}|$ for the functions belonging to these classes, see for example (\cite{DARUS},\cite{FRASI}, \cite{KANAS}) \cite{MISHRA} and \cite{Tang}). Further, the $(p,q)$-Bernardi integral operator for analytic functions, is defined in the open unit disc $\mathbb{U}$ to discuss the application of the results established in this paper.\\
 
 \section{Main Results }
 
 	First, we define the classes of $(p,q)$-starlike functions and $(p,q)$-convex functions, denoted by $\mathcal{S}^{*}_{p,q}(\phi)$ and  $\mathcal{C}_{p,q}(\phi)$, respectively, in terms of the  subordination principle by taking the $(p,q)$-derivative in place of $q$-derivative in the respective definitions of the classes of $q$-starlike and $q$-convex functions.\\
 
 The respective definitions of the classes $\mathcal{S}^{*}_{p,q}(\phi)$ and $\mathcal{C}_{p,q}(\phi)$ are as follows:
 \\	
 	\begin{defn}
 	The function $f\in \mathcal{A}$ is said to be  $(p,q)$-starlike if it satisfies the following subordination:
 	
 	\begin{equation}\label{21}
 \dfrac{zD_{p,q}f(z)}{f(z)} \prec \phi(z) \hspace{15mm}(  0<q<p\leq1),
 	\end{equation}
 	where  the function $\phi(z)$ is analytic in $\mathbb{U}$ with $ \Re (\phi(z))>0$, $\phi(0)=1$ and $\phi^{\prime}(0)>0$.\\
 	
 \end{defn}
	\vspace{5mm}
\begin{defn}
	The function $f\in \mathcal{A}$ is said to be  $(p,q)$-convex  if it satisfies the following subordination:
	\begin{equation}\label{22}
	\dfrac{D_{p,q}(zD_{p,q}f(z))}{D_{p,q}f(z)} \prec \phi(z) \hspace{15mm}( 0<q<p\leq 1),
	\end{equation}
	where  the function $\phi(z)$ is analytic in $\mathbb{U}$ with $ \Re (\phi(z))>0$, $\phi(0)=1$ and $\phi^{\prime}(0)>0$.
\end{defn}
\vspace{5mm}
\begin{figure}[h]
	\centering
	\includegraphics[width=0.50\linewidth]{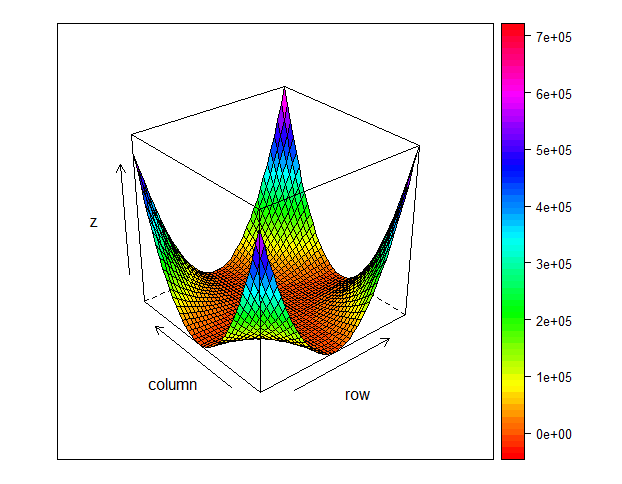}
	\caption{The class $\mathcal{S}^{*}_{0.2,0.5}\left( \dfrac{1+z}{1-z}\right) $ for the complex number $z=x+iy, \hspace{2mm}x,y\in \mathbb{R}$.}
	\label{fig:rplot}
\end{figure}

\begin{figure}[h]
	\centering
	\includegraphics[width=0.50\linewidth]{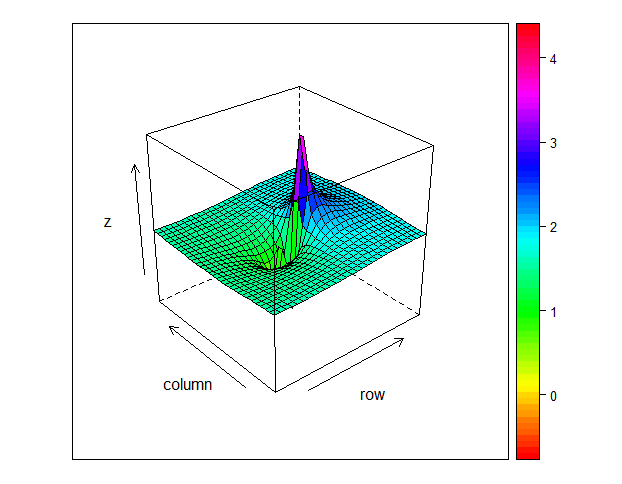}
	\caption{The class $\mathcal{C}_{0.2,0.5}\left( \dfrac{1+z}{1-z}\right) $ for the complex number $z=x+iy, \hspace{2mm}x,y\in \mathbb{R}$.}
	\label{fig:rplot}
\end{figure}
\begin{rem}
	 We note that, for $p=1$ the classes $\mathcal{S}^{*}_{p,q}(\phi)$ and $\mathcal{C}_{p,q}(\phi)$, reduce to the classes $\mathcal{S}^{*}_{q}(\phi)$ and $\mathcal{C}_{q}(\phi)$, which are defined by equations (\ref{8}) and (\ref{9}), respectively. Again, for $p=1$ and $q\longrightarrow 1-$, the classes  $\mathcal{S}^{*}_{p,q}(\phi)$ and $\mathcal{C}_{p,q}(\phi)$ reduce to the classes $\mathcal{S}^{*}(\phi)$, defined by equation (\ref{6})  and $\mathcal{C}(\phi)$, defined by equation (\ref{7}), respectively. \\
\end{rem}

 First of all, we need to mention the following lemma \cite{Ma}:\\
 
 \begin{lem}
 	If $p(z)=1+c_{1}z+c_{2}z^{2}+\dots$ is a function with $\Re(p(z))>0$ and $\mu\in \mathbb{C}$, then
 	\begin{equation*}
	|c_{2}-\mu c^{2}_{1}|\leq 2 \max\left\lbrace 1;|2\mu -1|\right\rbrace .\\
 	\end{equation*} 
 	
 	The result is sharp for giving two choices of the function $p(z)$ as follows:
 	\begin{center}
$p(z)=\dfrac{1+z^{2}}{1-z^{2}}$ and $p(z)=\dfrac{1+z}{1-z}$.
 	\end{center}
 \end{lem}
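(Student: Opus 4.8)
The plan is to reduce the inequality to a statement about the Schwarz function subordinating $p$. Since $\Re(p(z))>0$ in $\mathbb{U}$ and $p(0)=1$, the function $w(z)=\frac{p(z)-1}{p(z)+1}$ is analytic in $\mathbb{U}$ with $w(0)=0$ and $|w(z)|<1$; equivalently $p=\frac{1+w}{1-w}$ with $w$ a Schwarz function (this is the Carath\'eodory/Herglotz side of the picture in the form we need). Writing $w(z)=w_{1}z+w_{2}z^{2}+\cdots$ and expanding $\frac{1+w}{1-w}=1+2w+2w^{2}+\cdots$, comparison of coefficients with $p(z)=1+c_{1}z+c_{2}z^{2}+\cdots$ yields the relations $c_{1}=2w_{1}$ and $c_{2}=2w_{2}+2w_{1}^{2}$.

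Next I would substitute these into the target expression. A direct computation gives
\[
c_{2}-\mu c_{1}^{2}=2w_{2}+2w_{1}^{2}-4\mu w_{1}^{2}=2w_{2}+(2-4\mu)w_{1}^{2},
\]
hence $|c_{2}-\mu c_{1}^{2}|\le 2|w_{2}|+2|1-2\mu|\,|w_{1}|^{2}$. The crucial input is the sharp second-coefficient bound $|w_{2}|\le 1-|w_{1}|^{2}$. This follows by first applying the Schwarz lemma to $w$ to get $|w(z)|\le|z|$, so that $g(z):=w(z)/z$ extends to a self-map of $\mathbb{U}$ with $g(0)=w_{1}$ and $g'(0)=w_{2}$, and then invoking the Schwarz--Pick inequality $|g'(0)|\le 1-|g(0)|^{2}$. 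Inserting this and writing $t=|w_{1}|\in[0,1]$ gives
\[
|c_{2}-\mu c_{1}^{2}|\le 2(1-t^{2})+2|1-2\mu|t^{2}=2+2t^{2}\bigl(|2\mu-1|-1\bigr).
\]

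Finally I would optimize the right-hand side over $t\in[0,1]$: if $|2\mu-1|\le 1$ the maximum is attained at $t=0$ and equals $2$, while if $|2\mu-1|\ge 1$ the maximum is attained at $t=1$ and equals $2|2\mu-1|$; in either case $|c_{2}-\mu c_{1}^{2}|\le 2\max\{1;|2\mu-1|\}$. Sharpness is then exhibited by the two stated functions: $w(z)=z^{2}$ gives $p(z)=\frac{1+z^{2}}{1-z^{2}}$ with $c_{1}=0,\ c_{2}=2$, attaining equality when $|2\mu-1|\le1$, and $w(z)=z$ gives $p(z)=\frac{1+z}{1-z}$ with $c_{1}=c_{2}=2$, so $|c_{2}-\mu c_{1}^{2}|=2|2\mu-1|$, attaining equality when $|2\mu-1|\ge1$. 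The main obstacle is establishing the sharp bound $|w_{2}|\le 1-|w_{1}|^{2}$ for the Schwarz function; once this Schwarz--Pick type estimate is available, the remainder is only a one-variable real optimization. One could equivalently avoid $w$ altogether and use the classical representation $2c_{2}=c_{1}^{2}+(4-c_{1}^{2})x$ with $|x|\le1$ and $|c_{1}|\le2$ for functions of positive real part, after which the computation is essentially identical.
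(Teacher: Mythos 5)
Your proof is correct and complete. Note, however, that the paper does not prove this lemma at all: it is quoted verbatim from Ma--Minda's 1992 survey as a known result (Lemma 2.1, with citation), so there is no in-paper argument to compare against. Your route --- passing to the Schwarz function $w=(p-1)/(p+1)$, extracting $c_{1}=2w_{1}$, $c_{2}=2w_{2}+2w_{1}^{2}$, invoking the Schwarz--Pick bound $|w_{2}|\le 1-|w_{1}|^{2}$, and then optimizing $2+2t^{2}(|2\mu-1|-1)$ over $t\in[0,1]$ --- is the standard proof of this inequality and matches the machinery the paper itself uses elsewhere (it repeatedly converts subordinations into statements about $w=(p-1)/(p+1)$). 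The sharpness discussion is also right: $w(z)=z^{2}$ realizes the bound in the regime $|2\mu-1|\le 1$ and $w(z)=z$ in the regime $|2\mu-1|\ge 1$, which is exactly why the lemma lists those two extremal functions. The one point worth making explicit if you write this up is that $g(z)=w(z)/z$ extends analytically across $0$ (removable singularity, since $w(0)=0$) and satisfies $|g|\le 1$ by the Schwarz lemma, which is what legitimizes applying Schwarz--Pick at the origin; you do say this, so no gap remains.
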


Now, we investigate the Feteke-Szeg{\"o} inequality of the class  $\mathcal{S}^{*}_{p,q}(\phi)$ in the following result:\\
\begin{thm}
	Let $\phi (z)=1+b_{1}z+b_{2}z^{2} \dots $, with $b_{1}\neq 0$. If $f$, given by equation (\ref{1}), belongs to the class $\mathcal{S}^{*}_{p,q}(\phi)$, then 
	\begin{equation}\label{23}
	|a_{3}-\mu a^{2}_{2}|\leq \dfrac{|b_{1}|}{[3]_{p,q}-1} \max \left\lbrace 1; \left|\dfrac{ b_{2}}{b_{1}}+\dfrac{b_{1}}{[2]_{p,q}-1}\left(1-\dfrac{[3]_{p,q}-1}{[2]_{p,q}-1} \right) \mu \right|  \right\rbrace ,
	\end{equation}
	where $b_{1}, b_{2}, \dots \in \mathbb{R} $, $\mu \in \mathbb{C}$ and $0<q<p\leq 1$. The result is sharp.\\
\end{thm}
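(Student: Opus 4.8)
The plan is to run the standard Ma--Minda machinery adapted to the $(p,q)$-derivative. Since $f\in\mathcal{S}^{*}_{p,q}(\phi)$, by Definition 2.1 and the definition of subordination there is a Schwarz function $w$ with $w(0)=0$, $|w(z)|<1$, such that $zD_{p,q}f(z)/f(z)=\phi(w(z))$. First I would pass from $w$ to a function of positive real part by setting $p(z)=\dfrac{1+w(z)}{1-w(z)}=1+c_{1}z+c_{2}z^{2}+\cdots$, so that $\Re(p(z))>0$, and invert this to get $w(z)=\tfrac{c_{1}}{2}z+\tfrac12\bigl(c_{2}-\tfrac{c_{1}^{2}}{2}\bigr)z^{2}+\cdots$. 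Substituting into $\phi(w(z))=1+b_{1}w(z)+b_{2}w(z)^{2}+\cdots$ and collecting terms gives the right-hand side as $1+\tfrac{b_{1}c_{1}}{2}z+\bigl[\tfrac{b_{1}}{2}\bigl(c_{2}-\tfrac{c_{1}^{2}}{2}\bigr)+\tfrac{b_{2}c_{1}^{2}}{4}\bigr]z^{2}+\cdots$.

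Next I would expand the left-hand side. Using \eqref{105}, $D_{p,q}f(z)=1+[2]_{p,q}a_{2}z+[3]_{p,q}a_{3}z^{2}+\cdots$, so $zD_{p,q}f(z)=z+[2]_{p,q}a_{2}z^{2}+[3]_{p,q}a_{3}z^{3}+\cdots$; dividing by $f(z)=z+a_{2}z^{2}+a_{3}z^{3}+\cdots$ via the geometric series $1/(1+a_{2}z+a_{3}z^{2}+\cdots)=1-a_{2}z+(a_{2}^{2}-a_{3})z^{2}+\cdots$ yields
\[
\frac{zD_{p,q}f(z)}{f(z)}=1+([2]_{p,q}-1)a_{2}\,z+\Bigl(([3]_{p,q}-1)a_{3}-([2]_{p,q}-1)a_{2}^{2}\Bigr)z^{2}+\cdots .
\]
Equating the coefficients of $z$ and $z^{2}$ with the expansion above gives $a_{2}=\dfrac{b_{1}c_{1}}{2([2]_{p,q}-1)}$ and then, solving for $a_{3}$ after substituting this value of $a_{2}$,
\[
a_{3}=\frac{1}{[3]_{p,q}-1}\left[\frac{b_{1}c_{2}}{2}+\frac{c_{1}^{2}}{4}\left(\frac{b_{1}^{2}}{[2]_{p,q}-1}-b_{1}+b_{2}\right)\right].
\]

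The core computation is then to form $a_{3}-\mu a_{2}^{2}$ and rearrange it into the shape $\dfrac{b_{1}}{2([3]_{p,q}-1)}\bigl(c_{2}-\nu c_{1}^{2}\bigr)$ for a suitable $\nu\in\mathbb{C}$; a short calculation identifies $2\nu-1=-\dfrac{b_{2}}{b_{1}}-\dfrac{b_{1}}{[2]_{p,q}-1}+\dfrac{\mu b_{1}([3]_{p,q}-1)}{([2]_{p,q}-1)^{2}}$, so that $|2\nu-1|=\Bigl|\dfrac{b_{2}}{b_{1}}+\dfrac{b_{1}}{[2]_{p,q}-1}\bigl(1-\dfrac{[3]_{p,q}-1}{[2]_{p,q}-1}\mu\bigr)\Bigr|$. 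Applying Lemma 1.1 to $|c_{2}-\nu c_{1}^{2}|\le 2\max\{1,|2\nu-1|\}$ and multiplying by $\dfrac{|b_{1}|}{2([3]_{p,q}-1)}$ delivers exactly \eqref{23}. I expect the algebraic bookkeeping in this last rearrangement (keeping the $[2]_{p,q}$ and $[3]_{p,q}$ factors organized so the $\mu$-term comes out with the stated coefficient) to be the only real obstacle; everything else is routine series manipulation. For sharpness, I would take the two extremal choices of $p(z)$ from Lemma 1.1, namely $p(z)=\dfrac{1+z^{2}}{1-z^{2}}$ (i.e.\ $w(z)=z^{2}$, giving equality when the maximum is $1$) and $p(z)=\dfrac{1+z}{1-z}$ (i.e.\ $w(z)=z$, giving equality when the maximum is $|2\nu-1|$), and exhibit the corresponding $f$ defined by $zD_{p,q}f(z)/f(z)=\phi(w(z))$.
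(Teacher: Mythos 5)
Your proposal follows essentially the same route as the paper's own proof: the same passage from the Schwarz function $w$ to a Carath\'eodory function $p(z)=\frac{1+w(z)}{1-w(z)}$, the same coefficient comparison yielding $a_{2}=\frac{b_{1}c_{1}}{2([2]_{p,q}-1)}$ and the stated $a_{3}$, the same reduction of $a_{3}-\mu a_{2}^{2}$ to the form $\frac{b_{1}}{2([3]_{p,q}-1)}(c_{2}-\nu c_{1}^{2})$ with the identical $\nu$, the same appeal to the Ma--Minda lemma, and the same two extremal choices of $p(z)$ for sharpness. The argument is correct (your $|2\nu-1|$ agrees with the paper's equation for $v$; the slight discrepancy with the displayed bound in the theorem, where $\mu$ sits outside the parenthesis, is a typo in the statement, not a flaw in your reasoning).
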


\begin{proof}[\textbf{Proof}]
Let $f \in \mathcal{S}^{*}_{p,q}(\phi)$, then in view of Definition 2.1, the function $f$ satisfies the subordination (\ref{21}). Thus, by using equation (\ref{0}), there is a Schwarz function $w$ such that \\
\begin{equation}\label {25}
 \dfrac{zD_{p,q}f(z)}{f(z)}=\phi (w(z)).\\
\end{equation}

We define the function
\begin{equation}\label{111}
p(z)=1+c_{1}z+c_{2}z^{2}+\dots
\end{equation}
 in terms of the function $w(z)$ as :
\begin{equation*}
 p(z)=\dfrac{1+w(z)}{1-w(z)}, 
\end{equation*}
which gives
\begin{equation}\label{29}
 w(z)=\dfrac{p(z)-1}{p(z)+1}.
\end{equation}

Using equations (\ref{111}) and (\ref{29}), we get
  \begin{equation}\label {26}
  \phi (w(z))=\phi\left( \dfrac{c_{1}z+c_{2}z^{2}+\dots}{2+c_{1}z+c_{2}z^{2}+\dots}\right) = \phi \left( \dfrac{1}{2}\left[c_{1}z+\left( c_{2}-\dfrac{1}{2}c^{2}_{1}\right) z^{2}+\left( c_{3}-c_{1}c_{2}+\dfrac{c^{3}_{1}}{4}\right) z^{3}+\dots\right]\right).\\
  \end{equation}
  
 Since $\phi (z)=1+b_{1}z+b_{2}z^{2} \dots $, therefore, equation (\ref{26}) gives
 \begin{equation}\label{27}
\phi (w(z))=1+\dfrac{b_{1}c_{1}}{2}z+\left[ \dfrac{b_{1}}{2}\left(c_{2}-\dfrac{c^{2}_{1}}{2}\right)+\dfrac{b_{2}c^{2}_{1}}{4} \right]z^{2}+\dots. \\
 \end{equation}
 
 Now, using equations (\ref{1}) and (\ref{105}), we get
 \begin{equation}\label{24}
 \dfrac{zD_{p,q}f(z)}{f(z)}=\dfrac{z+\sum_{n=2}^{\infty}[n]_{p,q}a_{n}z^{n}}{z+\sum_{n=2}^{\infty}a_{n}z^{n}}=1+([2]_{p,q}-1)a_{2}z+\bigg(([3]_{p,q}-1)a_{3}-([2]_{p,q}-1)a^{2}_{2} \bigg)z^{2} +\dots ~ .\\
 \end{equation}
 
 Using equations (\ref{27}) and (\ref{24}) in equation (\ref{25}), then comparing the coefficients of $z$ and $z^{2}$ from the both sides of the resultant equation and simplifying, we get
 \begin{equation}\label{30}
a_{2}=\dfrac{b_{1}c_{1}}{2 ([2]_{p,q}-1)}
 \end{equation}
 and 
 \begin{equation}\label{31}
 a_{3}=\dfrac{b_{1}}{2 ([3]_{p,q}-1)}\left[ c_{2}-\dfrac{1}{2}\left( 1-\dfrac{b_{2}}{b_{1}}-\dfrac{b_{1}}{[2]_{p,q}-1}\right)c^{2}_{1} \right] .\\
 \end{equation}
 
 Next, for $\mu \in \mathbb{C}$, using equations (\ref{30}) and (\ref{31}), we have
 \begin{equation}\label{33}
a_{3}-\mu a^{2}_{2}=\dfrac{b_{1}}{2 ([3]_{p,q}-1)}\left[c_{2}- \dfrac{1}{2}\left( 1-\dfrac{b_{2}}{b_{1}}-\dfrac{b_{1}}{[2]_{p,q}-1}\left(1-\dfrac{[3]_{p,q}-1}{[2]_{p,q}-1} \mu \right) \right)c^{2}_{1} \right].\\
 \end{equation}
 
If we take 
 \begin{equation}\label{34}
v=\dfrac{1}{2}\left( 1-\dfrac{b_{2}}{b_{1}}-\dfrac{b_{1}}{[2]_{p,q}-1}\left(1-\dfrac{[3]_{p,q}-1}{[2]_{p,q}-1} \mu \right) \right),
 \end{equation}
 
 then, from equation  (\ref{33}), we get
 \begin{equation}\label{35}
|a_{3}-\mu a^{2}_{2}|=\dfrac{|b_{1}|}{2 ([3]_{p,q}-1)} |c_{2}-vc^{2}_{1}|.\\
 \end{equation}
 
 Hence, by applying Lemma 2.1, equation (\ref{35}), gives the Feteke-Szeg{\"o} inequality, given by equation (\ref{23}), for the class $\mathcal{S}^{*}_{p,q}(\phi)$.\\
 
 Further, our result is sharp, that is, the equality holds, when $p(z)=p_{1}(z)=\dfrac{1+z}{1-z}=1
 +2z+2z^{2}+\dots$ and equation (\ref{25}), gives
 \begin{equation}\label{107}
\dfrac{zD_{p,q}f(z)}{f(z)}=\phi \left(\dfrac{p_{1}(z)-1}{p_{1}(z)+1} \right)=\phi(z)=1+b_{1}z+b_{2}z^{2} \dots .\\
 \end{equation}
 
 Then, by comparing equations (\ref{27}) and (\ref{107}), we have $c_{1}=2$ and $c_{2}=2$, then equation (\ref{33}) gives the equality sign in the place of inequality in assertion  (\ref{23}).\\
 
 Similarly, for $p(z)=p_{2}(z)=\dfrac{1+z^{2}}{1-z^{2}}=1+2z^{2}+\dots$, equation (\ref{25}) gives
 \begin{equation}\label{108}
\dfrac{zD_{p,q}f(z)}{f(z)}= \phi \left(\dfrac{p_{2}(z)-1}{p_{2}(z)+1} \right)=\phi (z^{2})=1+b_{1}z^{2}+\dots .\\
 \end{equation}
 
 Then, by comparing equations (\ref{27}) and (\ref{108}), we have $c_{1}=0$ and $c_{2}=2$ and hence  equation (\ref{33}) gives the equality sign in the place of  inequality  in assertion (\ref{23}).\\
\end{proof}

Taking $p=1$ and $q\longrightarrow 1-$ in Theorem 2.1, we get the following corollary \cite{cetinkayafekete}:
\begin{cor}
	Let $\phi (z)=1+b_{1}z+b_{2}z^{2} \dots $, with $b_{1}\neq 0$. If $f$ given by equation (\ref{1}) belongs to the class $\mathcal{S}^{*}(\phi)$, then 
\begin{equation*}
|a_{3}-\mu a^{2}_{2}|\leq \dfrac{|b_{1}|}{2} \max \left\lbrace 1; \left|\dfrac{ b_{2}}{b_{1}}+b_{1}\left(1-2 \mu \right) \right|  \right\rbrace ,
\end{equation*}
where $b_{1}, b_{2}, \dots \in \mathbb{R} $ and $\mu \in \mathbb{C}$ . The result is sharp.\\
\end{cor}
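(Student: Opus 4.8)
The plan is to obtain Corollary 2.1 as a direct limiting case of Theorem 2.1, so no new machinery is needed. First I would recall from the discussion following equation (\ref{3}) that $[n]_{p,q}\longrightarrow n$ as $p=1$ and $q\longrightarrow 1-$; in particular $[2]_{p,q}-1\longrightarrow 1$ and $[3]_{p,q}-1\longrightarrow 2$. Moreover, by Remark 2.1 the class $\mathcal{S}^{*}_{p,q}(\phi)$ reduces to the Ma--Minda class $\mathcal{S}^{*}(\phi)$ of (\ref{6}) under exactly these choices of $p$ and $q$, so a function $f$ given by (\ref{1}) lies in $\mathcal{S}^{*}(\phi)$ precisely when it lies in $\mathcal{S}^{*}_{p,q}(\phi)$ in this limit.

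Next I would substitute these limiting values into the assertion (\ref{23}) of Theorem 2.1. The prefactor $\dfrac{|b_{1}|}{[3]_{p,q}-1}$ tends to $\dfrac{|b_{1}|}{2}$, while inside the maximum the term
$\dfrac{b_{1}}{[2]_{p,q}-1}\left(1-\dfrac{[3]_{p,q}-1}{[2]_{p,q}-1}\,\mu\right)$
tends to $b_{1}\left(1-2\mu\right)$, and $\dfrac{b_{2}}{b_{1}}$ is unchanged. Hence (\ref{23}) becomes
\begin{equation*}
|a_{3}-\mu a^{2}_{2}|\leq \dfrac{|b_{1}|}{2}\,\max\left\lbrace 1;\ \left|\dfrac{b_{2}}{b_{1}}+b_{1}\left(1-2\mu\right)\right|\right\rbrace,
\end{equation*}
which is exactly the claimed inequality.

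Finally, for sharpness I would invoke the sharpness already established in Theorem 2.1: the extremal functions arise from the choices $p(z)=p_{1}(z)=\dfrac{1+z}{1-z}$ (giving $c_{1}=c_{2}=2$) and $p(z)=p_{2}(z)=\dfrac{1+z^{2}}{1-z^{2}}$ (giving $c_{1}=0$, $c_{2}=2$) in the representation (\ref{33}); taking $p=1$, $q\longrightarrow 1-$ in (\ref{33}) shows that equality persists in the limit. I do not anticipate any real obstacle here, since the argument is purely a specialization; the only point requiring a word of care is that the limit $q\longrightarrow 1-$ is taken after the estimate (\ref{23}) has been derived, so that the bound is inherited termwise and the extremal configurations carry over without change.
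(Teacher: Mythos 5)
Your proposal is correct and coincides with the paper's own derivation: the paper obtains Corollary 2.1 precisely by setting $p=1$ and letting $q\longrightarrow 1-$ in Theorem 2.1, so that $[2]_{p,q}-1\longrightarrow 1$ and $[3]_{p,q}-1\longrightarrow 2$, which turns (\ref{23}) into the stated bound. Your additional remarks on the reduction of the class via Remark 2.1 and on the persistence of sharpness are consistent with what the paper leaves implicit.
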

\begin{rem}
	For $p=1$, inequality (\ref{23}), gives the Feteke-Szeg{\"o} inequality  \cite{cetinkayafekete} for the class  $\mathcal{S}^{*}_{q}(\phi)$ .\\
\end{rem}

Next, we investigate the Feteke-Szeg{\"o} inequality for the class  $\mathcal{C}_{p,q}(\phi)$ in the following result:\\

\begin{thm}
	Let $\phi (z)=1+b_{1}z+b_{2}z^{2} \dots $ with $b_{1}\neq 0$. If $f$,  given by equation (\ref{1}), belongs to the class $\mathcal{C}_{p,q}(\phi)$, then 
	\begin{equation}\label{44}
	|a_{3}-\mu a^{2}_{2}|\leq \dfrac{|b_{1}|}{[3]_{p,q}([3]_{p,q}-1)} \max \left\lbrace 1; \left|\dfrac{ b_{2}}{b_{1}}+\dfrac{b_{1}}{[2]_{p,q}-1}\left(1-\dfrac{[3]_{p,q}([3]_{p,q}-1)}{[2]^{2}_{p,q}([2]_{p,q}-1)} \right) \mu \right|  \right\rbrace ,
	\end{equation}
	where $b_{1}, b_{2}, \dots \in \mathbb{R} $, $\mu \in \mathbb{C}$ and $0<q<p\leq 1$. The result is sharp.\\
\end{thm}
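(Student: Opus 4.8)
The plan is to follow the proof of Theorem 2.1 almost line for line; the only genuinely new ingredient is the power-series expansion of the convexity quotient $\dfrac{D_{p,q}(zD_{p,q}f(z))}{D_{p,q}f(z)}$, since the Schwarz-function machinery and the expansion (\ref{27}) of $\phi(w(z))$ do not depend on the class. Concretely, if $f\in\mathcal{C}_{p,q}(\phi)$, then by Definition 2.2 and equation (\ref{0}) there is a Schwarz function $w$ with $\dfrac{D_{p,q}\bigl(zD_{p,q}f(z)\bigr)}{D_{p,q}f(z)}=\phi(w(z))$, and I would reuse verbatim the function $p(z)=1+c_{1}z+c_{2}z^{2}+\cdots=\dfrac{1+w(z)}{1-w(z)}$ together with the expansion (\ref{27}).

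The computational core is to expand the left-hand side. From (\ref{105}) we have $D_{p,q}f(z)=1+[2]_{p,q}a_{2}z+[3]_{p,q}a_{3}z^{2}+\cdots$, hence $zD_{p,q}f(z)=z+[2]_{p,q}a_{2}z^{2}+[3]_{p,q}a_{3}z^{3}+\cdots$, and applying (\ref{106}) termwise gives $D_{p,q}\bigl(zD_{p,q}f(z)\bigr)=1+[2]^{2}_{p,q}a_{2}z+[3]^{2}_{p,q}a_{3}z^{2}+\cdots$. Dividing the last series by $D_{p,q}f(z)$ and collecting terms through order $z^{2}$ should yield
\begin{equation*}
\frac{D_{p,q}\bigl(zD_{p,q}f(z)\bigr)}{D_{p,q}f(z)}=1+[2]_{p,q}\bigl([2]_{p,q}-1\bigr)a_{2}z+\Bigl([3]_{p,q}\bigl([3]_{p,q}-1\bigr)a_{3}-[2]^{2}_{p,q}\bigl([2]_{p,q}-1\bigr)a^{2}_{2}\Bigr)z^{2}+\cdots.
\end{equation*}
I expect the one delicate point here to be keeping the powers of $[2]_{p,q}$ and $[3]_{p,q}$ straight between the numerator and the denominator during this division; once this identity is in hand the rest is routine.

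Comparing the coefficients of $z$ and $z^{2}$ with those of (\ref{27}) gives $a_{2}=\dfrac{b_{1}c_{1}}{2[2]_{p,q}([2]_{p,q}-1)}$ and, after substituting this value of $a_{2}$,
\begin{equation*}
a_{3}=\frac{b_{1}}{2[3]_{p,q}\bigl([3]_{p,q}-1\bigr)}\left[c_{2}-\frac{1}{2}\left(1-\frac{b_{2}}{b_{1}}-\frac{b_{1}}{[2]_{p,q}-1}\right)c^{2}_{1}\right].
\end{equation*}
For $\mu\in\mathbb{C}$ I would then combine these two relations to write $a_{3}-\mu a^{2}_{2}=\dfrac{b_{1}}{2[3]_{p,q}([3]_{p,q}-1)}\bigl(c_{2}-v\,c^{2}_{1}\bigr)$, where, in exact analogy with (\ref{34}),
\begin{equation*}
v=\frac{1}{2}\left(1-\frac{b_{2}}{b_{1}}-\frac{b_{1}}{[2]_{p,q}-1}\left(1-\frac{[3]_{p,q}([3]_{p,q}-1)}{[2]^{2}_{p,q}([2]_{p,q}-1)}\,\mu\right)\right),
\end{equation*}
so that $|a_{3}-\mu a^{2}_{2}|=\dfrac{|b_{1}|}{2[3]_{p,q}([3]_{p,q}-1)}\,|c_{2}-vc^{2}_{1}|$. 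Applying Lemma 2.1 with $v$ in place of $\mu$ then gives the asserted bound (\ref{44}).

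Finally, sharpness is established exactly as in Theorem 2.1: the choice $p(z)=p_{1}(z)=\dfrac{1+z}{1-z}$ forces $c_{1}=c_{2}=2$, and the choice $p(z)=p_{2}(z)=\dfrac{1+z^{2}}{1-z^{2}}$ forces $c_{1}=0$ and $c_{2}=2$; in each case the chain of identities above turns the estimate into an equality, which shows that (\ref{44}) is sharp.
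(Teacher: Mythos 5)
Your proposal is correct and follows essentially the same route as the paper's proof: the same Schwarz-function setup, the same series expansion of $\dfrac{D_{p,q}(zD_{p,q}f(z))}{D_{p,q}f(z)}$ yielding the coefficients $[2]_{p,q}([2]_{p,q}-1)a_{2}$ and $[3]_{p,q}([3]_{p,q}-1)a_{3}-[2]^{2}_{p,q}([2]_{p,q}-1)a^{2}_{2}$, the same value of $v$, the same application of Lemma 2.1, and the same sharpness examples. The division step you flag as delicate checks out exactly as you predict.
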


\begin{proof}[\textbf{Proof}]
	Let $f \in \mathcal{C}_{p,q}(\phi)$, then in view of Definition 2.2 the function $f$ satisfies the subordination (\ref{22}), thus, by using equation (\ref{0}), there is a Schwarz function $w$ such that \\
	\begin{equation}\label {36}
	\dfrac{D_{p,q}(zD_{p,q}f(z))}{D_{p,q}f(z)}=\phi (w(z)),
	\end{equation}
 where $w$ is given by equation (\ref{29}) and $\phi(w(z))$ is given by equation (\ref{27}).\\
     
  Using equations (\ref{1}) and (\ref{105}), we get
 \begin{equation}\label{37}
\begin{split}
&\dfrac{D_{p,q}(zD_{p,q}f(z))}{D_{p,q}f(z)}=\dfrac{z+\sum_{n=2}^{\infty}[n]^{2}_{p,q}a_{n}z^{n}}{z+\sum_{n=2}^{\infty}[n]_{p,q} a_{n}z^{n}}\\
&\hspace{26mm}=1+[2]_{p,q}([2]_{p,q}-1)a_{2}z+ \bigg([3]_{p,q}([3]_{p,q}-1)a_{3}-[2]^{2}_{p,q}([2]_{p,q}-1)a^{2}_{2} \bigg)z^{2} +\dots ~.\\
\end{split}
 \end{equation}
 
Comparing the coefficients of $z$ and $z^{2}$ in equations (\ref{27}) and (\ref{37}) and simplifying, we obtain 
\begin{equation}\label{38}
	a_{2}=\dfrac{b_{1}c_{1}}{2 [2]_{p,q}([2]_{p,q}-1)}
\end{equation}
and 
\begin{equation}\label{39}
	a_{3}=\dfrac{b_{1}}{2 [3]_{p,q}([3]_{p,q}-1)}\left[ c_{2}-\dfrac{1}{2}\left( 1-\dfrac{b_{2}}{b_{1}}-\dfrac{b_{1}}{[2]_{p,q}-1}\right)c^{2}_{1} \right] .\\
\end{equation}

Next, for $\mu \in \mathbb{C}$, equations (\ref{38}) and (\ref{39}), give
\begin{equation}\label{40}
	a_{3}-\mu a^{2}_{2}=\dfrac{b_{1}}{2 [3]_{p,q}([3]_{p,q}-1)}\left[c_{2}- \dfrac{1}{2}\left( 1-\dfrac{b_{2}}{b_{1}}-\dfrac{b_{1}}{[2]_{p,q}-1}\left(1-\dfrac{[3]_{p,q}([3]_{p,q}-1)}{[2]^{2}_{p,q}([2]_{p,q}-1)} \mu \right) \right) c^{2}_{1}\right] .\\
\end{equation}

If we take 
\begin{equation}\label{41}
	v=\dfrac{1}{2}\left( 1-\dfrac{b_{2}}{b_{1}}-\dfrac{b_{1}}{[2]_{p,q}-1}\left(1-\dfrac{[3]_{p,q}([3]_{p,q}-1)}{[2]^{2}_{p,q}([2]_{p,q}-1)} \mu \right) \right),
\end{equation}
then using equations (\ref{40}) and (\ref{41}), we get
\begin{equation}\label{42}
	|a_{3}-\mu a^{2}_{2}|=\dfrac{|b_{1}|}{2 [3]_{p,q}([3]_{p,q}-1)} |c_{2}-vc^{2}_{1}|.
\end{equation}

Now, by applying Lemma 2.1, equation (\ref{42}), gives the Feteke-Szeg{\"o} inequality, given by equation (\ref{44}) for the class $\mathcal{C}_{p,q}(\phi)$. \\

 Further, our result is sharp, when $p(z)=p_{1}(z)=\dfrac{1+z}{1-z}=1
+2z+2z^{2}+\dots$  and  equation (\ref{36}), gives
\begin{equation}\label{109}
\dfrac{D_{p,q}(zD_{p,q}f(z))}{D_{p,q}f(z)}=\phi \left(\dfrac{p_{1}(z)-1}{p_{1}(z)+1} \right)=\phi(z)=1+b_{1}z+b_{2}z^{2} \dots~.\\
\end{equation}

Then, by comparing equations (\ref{27}) and (\ref{109}), we have $c_{1}=2$ and $c_{2}=2$ and hence  equation (\ref{40}) gives the equality sign in the place of inequality in assertion (\ref{44}) .\\

Similarly, when $p(z)=p_{2}(z)=\dfrac{1+z^{2}}{1-z^{2}}=1+2z^{2}+\dots$, equation (\ref{36}) gives
\begin{equation}\label{110}
\dfrac{D_{p,q}(zD_{p,q}f(z))}{D_{p,q}f(z)}= \phi \left(\dfrac{p_{2}(z)-1}{p_{2}(z)+1} \right)=\phi (z^{2})=1+b_{1}z^{2}+\dots,
\end{equation}
then, by comparing equations (\ref{27}) and (\ref{110}), we have $c_{1}=0$ and $c_{2}=2$ and hence  equation (\ref{40}) gives the equality sign in the place of inequality in assertion (\ref{44}).\\

\end{proof}
Taking $p=1$ and $q\longrightarrow 1-$ in Theorem 2.2, we get the following corollary \cite{cetinkayafekete}:
\begin{cor}
		Let $\phi (z)=1+b_{1}z+b_{2}z^{2} \dots $, with $b_{1}\neq 0$. If $f$ given by equation (\ref{1}) belongs to the class $\mathcal{C}(\phi)$, then 
	\begin{equation}
	|a_{3}-\mu a^{2}_{2}|\leq \dfrac{|b_{1}|}{6} \max \left\lbrace 1; \left|\dfrac{ b_{2}}{b_{1}}+b_{1}\left(1-\dfrac{3}{2}\mu \right)  \right|  \right\rbrace ,
	\end{equation}
	where $b_{1}, b_{2}, \dots \in \mathbb{R} $ and $\mu \in \mathbb{C}$ . The result is sharp.\\
\end{cor}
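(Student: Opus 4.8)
The plan is to derive this corollary directly from Theorem~2.2 by specialising the parameters to $p=1$ and $q\longrightarrow 1^-$. By Remark~2.1 (the Remark following Definition~2.2), for these parameters the class $\mathcal{C}_{p,q}(\phi)$ reduces to the Ma--Minda convex class $\mathcal{C}(\phi)$ of equation~(\ref{7}); hence any $f\in\mathcal{C}(\phi)$ written in the form~(\ref{1}) is, with $p=1$ and $q$ close to $1$, an element of $\mathcal{C}_{p,q}(\phi)$, so the bound~(\ref{44}) is available to it.

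First I would record the relevant limits. From~(\ref{3}) one has $[n]_{p,q}=\dfrac{p^{n}-q^{n}}{p-q}\longrightarrow n$ as $p=1$, $q\longrightarrow 1^-$, so $[2]_{p,q}\longrightarrow 2$ and $[3]_{p,q}\longrightarrow 3$. Consequently $[3]_{p,q}([3]_{p,q}-1)\longrightarrow 6$, $[2]_{p,q}-1\longrightarrow 1$, and
\[
\frac{[3]_{p,q}([3]_{p,q}-1)}{[2]^{2}_{p,q}([2]_{p,q}-1)}\longrightarrow \frac{6}{4\cdot 1}=\frac{3}{2}.
\]
Substituting these values into~(\ref{44}), and noting that the left-hand side $|a_{3}-\mu a^{2}_{2}|$ is a fixed quantity independent of $p,q$ while the right-hand side is a continuous function of $p$ and $q$ near $(1,1)$ (its denominators do not vanish there), the inequality passes to the limit and becomes
\[
|a_{3}-\mu a^{2}_{2}|\le \frac{|b_{1}|}{6}\max\left\{1;\ \left|\frac{b_{2}}{b_{1}}+b_{1}\left(1-\frac{3}{2}\mu\right)\right|\right\},
\]
which is precisely the assertion.

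For sharpness I would carry the extremal functions of Theorem~2.2 through the same limit. With $p=1$, $q\longrightarrow 1^-$ the subordination~(\ref{22}) becomes $1+\dfrac{zf''(z)}{f'(z)}\prec\phi(z)$, and the two choices $w(z)=z$ (equivalently $p(z)=p_{1}(z)=\tfrac{1+z}{1-z}$, giving $c_{1}=c_{2}=2$) and $w(z)=z^{2}$ (equivalently $p(z)=p_{2}(z)=\tfrac{1+z^{2}}{1-z^{2}}$, giving $c_{1}=0$, $c_{2}=2$) turn the identity~(\ref{40}) --- now with $[2]_{p,q}=2$ and $[3]_{p,q}=3$ --- into an equality; these are exactly the classical Ma--Minda extremal functions for $\mathcal{C}(\phi)$, so the constant is best possible.

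I do not expect any genuine obstacle: the only point needing a word of justification is that the weak inequality in~(\ref{44}) is preserved under the limit $p=1$, $q\longrightarrow 1^-$, which is immediate from the continuity noted above. Alternatively, one can bypass limits entirely and simply repeat the proof of Theorem~2.2 with $D_{p,q}$ replaced by the ordinary derivative and $[n]_{p,q}$ by $n$: this reproduces the analogues of~(\ref{38})--(\ref{42}) with the numerical constants $6$ and $3/2$ already in place, after which Lemma~2.1 yields the stated bound.
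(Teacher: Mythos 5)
Your proposal matches the paper's own derivation: the corollary is obtained exactly by setting $p=1$, $q\longrightarrow 1^-$ in Theorem 2.2, using $[2]_{p,q}\longrightarrow 2$, $[3]_{p,q}\longrightarrow 3$ so that $[3]_{p,q}([3]_{p,q}-1)\longrightarrow 6$ and $\tfrac{[3]_{p,q}([3]_{p,q}-1)}{[2]^{2}_{p,q}([2]_{p,q}-1)}\longrightarrow \tfrac{3}{2}$, with sharpness inherited from the same extremal choices $p_{1}(z)=\tfrac{1+z}{1-z}$ and $p_{2}(z)=\tfrac{1+z^{2}}{1-z^{2}}$. Your closing remark --- that one can avoid the limiting argument altogether by rerunning the proof of Theorem 2.2 with $D_{p,q}$ replaced by the ordinary derivative and $[n]_{p,q}$ by $n$ --- is in fact the cleaner way to handle the (otherwise delicate) passage from membership in $\mathcal{C}(\phi)$ to membership in the approximating classes, and the paper itself offers nothing more than the specialization you describe.
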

\begin{rem}
	For $p=1$, inequality (\ref{44}) gives the Feteke-Szeg{\"o} inequality for the class  $\mathcal{C}_{q}(\phi)$ \cite{cetinkayafekete}.\\
\end{rem}

In the next section, we discuss the coefficient bounds of the first and third coefficients of the functions belonging to the classes  $\mathcal{S}^{*}_{p,q}(\phi)$ and $\mathcal{C}_{p,q}(\phi)$.\\

\section{Coefficient bounds}

In this section, we estimate the coefficient bounds for the  coefficients of $z$ and $z^{2}$ of $(p,q)$-starlike and $(p,q)$-convex functions.\\

First, we need to mention the following lemma \cite{Ma} :\\
\begin{lem}
	If $p(z)=1+c_{1}z+c_{2}z^{2}+\dots$ is a function with $\Re(p(z))>0$, then 
	\begin{eqnarray}\label{10}
	|c_{2}-v c^{2}_{1}|\leq \left\{
	\begin{array}{lll}
	-4v+2, &if ~~ v\leq 0;
	\\
	2, & if ~~ 0\leq v \leq 1;
	\\
	4v-2, & if~~ v\geq 1.
	\end{array}\right.
	\end{eqnarray}\\
	Also, 
	the above upper bound is sharp, and it can be improved as follows when $0<v<1$:
\begin{equation}\label{100}
|c_{2}-v c^{2}_{1}|+v|c_{1}|^{2} \leq 2 \ \ \ \left( 0<v\leq \dfrac{1}{2}\right)
\end{equation}
and
\begin{equation}\label{101}
|c_{2}-v c^{2}_{1}|+(1-v)|c_{1}|^{2} \leq 2 \ \ \ \left(  \dfrac{1}{2}\leq v < 1 \right).\\
\end{equation}
\end{lem}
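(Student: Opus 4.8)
The plan is to reduce the whole statement to the classical parametric description of the coefficients of a function in the Carath\'eodory class and then to carry out a one-variable optimization. Since $p(0)=1$ and $\Re(p(z))>0$, the Herglotz representation gives $p(z)=\int_{|\zeta|=1}\frac{1+\zeta z}{1-\zeta z}\,d\mu(\zeta)$ for a probability measure $\mu$ on the unit circle; comparing Taylor coefficients yields the well-known relation $2c_{2}=c_{1}^{2}+(4-c_{1}^{2})x$ for some $x\in\mathbb{C}$ with $|x|\leq 1$, together with $|c_{1}|\leq 2$. Moreover, replacing $p(z)$ by $e^{-i\theta}p(e^{i\theta}z)$ changes neither $|c_{2}-vc_{1}^{2}|$ nor $|c_{1}|$, so I would first normalise so that $c_{1}\geq 0$ and set $t:=c_{1}^{2}\in[0,4]$.

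Next I would substitute the representation into the quantity of interest, getting $c_{2}-vc_{1}^{2}=\tfrac{1}{2}(1-2v)t+\tfrac{1}{2}(4-t)x$, and then apply the triangle inequality with $|x|\leq 1$ to obtain $|c_{2}-vc_{1}^{2}|\leq\tfrac{1}{2}|1-2v|\,t+\tfrac{1}{2}(4-t)=2+\tfrac{t}{2}(|1-2v|-1)$. The right-hand side is affine in $t$ on $[0,4]$, so its maximum is attained at $t=0$ when $0\leq v\leq 1$, which gives the bound $2$, and at $t=4$ when $v\leq 0$ or $v\geq 1$, which gives $2|1-2v|$, that is, $-4v+2$ for $v\leq 0$ and $4v-2$ for $v\geq 1$. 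This is precisely (\ref{10}).

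For the refinements (\ref{100}) and (\ref{101}) I would keep the same affine estimate but add the extra nonnegative term before simplifying. When $0<v\leq\tfrac{1}{2}$ one has $|1-2v|=1-2v$, so $|c_{2}-vc_{1}^{2}|+v|c_{1}|^{2}\leq 2+\tfrac{t}{2}(1-2v-1)+vt=2$; when $\tfrac{1}{2}\leq v<1$ one has $|1-2v|=2v-1$, so $|c_{2}-vc_{1}^{2}|+(1-v)|c_{1}|^{2}\leq 2+\tfrac{t}{2}(2v-1-1)+(1-v)t=2$. Sharpness of (\ref{10}) is witnessed by $p(z)=(1+z^{2})/(1-z^{2})$, for which $c_{1}=0$ and $c_{2}=2$ (extremal when $0\leq v\leq 1$), and by $p(z)=(1+z)/(1-z)$, for which $c_{1}=c_{2}=2$ (extremal when $v\leq 0$ or $v\geq 1$); rotations of these two functions realise equality in the refined inequalities as well.

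The only ingredient that is not completely elementary is the identity $2c_{2}=c_{1}^{2}+(4-c_{1}^{2})x$ with $|x|\leq 1$ (equivalently, the rotation-invariant moment inequality $|2c_{2}-c_{1}^{2}|\leq 4-|c_{1}|^{2}$, which follows from Cauchy--Schwarz applied to the representing measure $\mu$); everything downstream of it is merely maximising an affine function over an interval. So that identity is the one point worth being careful about, and in the present setting it may simply be quoted from \cite{Ma}.
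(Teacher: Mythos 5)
The paper offers no proof of this lemma at all: it is quoted verbatim from Ma--Minda \cite{Ma} with the phrase ``we need to mention the following lemma,'' so there is nothing internal to compare your argument against. Your proof is correct and is, in substance, the standard (indeed the original) argument: Herglotz representation, the coefficient relation $2c_{2}=c_{1}^{2}+(4-c_{1}^{2})x$ with $|x|\leq 1$, and an affine maximisation in $t=c_{1}^{2}\in[0,4]$; the case analysis, the two refinements, and the extremal functions all check out. Two small points of hygiene. First, the normalising rotation for the Carath\'eodory class is $p(e^{i\theta}z)$, not $e^{-i\theta}p(e^{i\theta}z)$; the latter destroys $p(0)=1$ (the prefactor belongs to the normalisation of the class $S$, not of functions with positive real part). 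Second, the identity you quote holds in general in the form $|2c_{2}-c_{1}^{2}|\leq 4-|c_{1}|^{2}$, and only takes the shape $2c_{2}=c_{1}^{2}+(4-c_{1}^{2})x$ after you have arranged $c_{1}\geq 0$; you should therefore perform the rotation before invoking the identity rather than after stating it. Neither issue affects the validity of the argument, and supplying this self-contained proof is arguably an improvement on the paper, which leaves the lemma as an unproved citation.
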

\vspace{5mm}
Now, we establish the following result for estimation of the coefficient bound for the functions belonging to the class  $\mathcal{S}^{*}_{p,q}(\phi)$ :\\

\begin{thm}
	Let $\phi (z)=1+b_{1}z+b_{2}z^{2} \dots $  with $b_{1}> 0$ and $b_{2}\geq 0$. Let
	\begin{align}
	\sigma _{1}&=\dfrac{([2]_{p,q}-1)b^{2}_{1}+([2]_{p,q}-1)^{2}(b_{2}-b_{1})}{([3]_{p,q}-1)b^{2}_{1}},\label{50}\\
	\sigma _{2}&=\dfrac{([2]_{p,q}-1)b^{2}_{1}+([2]_{p,q}-1)^{2}(b_{2}+b_{1})}{([3]_{p,q}-1)b^{2}_{1}},\label{51}\\
	\sigma _{3}&=\dfrac{([2]_{p,q}-1)b^{2}_{1}+([2]_{p,q}-1)^{2} b_{2}}{([3]_{p,q}-1)b^{2}_{1}}.\label{53}
	\end{align}
	\vspace{5mm}
	
	If $f$, given by equation (\ref{1}), belongs to the class $\mathcal{S}^{*}_{p,q}(\phi)$, then
		\begin{eqnarray}\label{58}
|a_{3}-\mu a^{2}_{2}|\leq \left\{
	\begin{array}{lll}
\dfrac{b_{2}}{[3]_{p,q}-1}+\dfrac{b^{2}_{1}}{[2]_{p,q}-1}\left(\dfrac{1}{[3]_{p,q}-1}-\dfrac{\mu}{[2]_{p,q}-1}\right)  , &if ~~ \mu \leq \sigma_{1};
	\\
	\\
	\dfrac{b_{1}}{[3]_{p,q}-1}, & if ~~ \sigma_{1}\leq \mu \leq \sigma_{2};
	\\
	\\
\dfrac{-b_{2}}{[3]_{p,q}-1}-\dfrac{b^{2}_{1}}{[2]_{p,q}-1}\left(\dfrac{1}{[3]_{p,q}-1}-\dfrac{\mu}{[2]_{p,q}-1}\right) , & if~~ \mu \geq \sigma_{2}.
	\end{array}\right.
	\end{eqnarray}\\ 
	
	Further, if $\sigma_{1} < \mu \leq \sigma_{3}$, then 
	\begin{equation}\label{55}
|a_{3}-\mu a^{2}_{2}|+\dfrac{([2]_{p,q}-1)^{2}}{([3]_{p,q}-1)b^{2}_{1}}\left[ b_{1}-b_{2}-\dfrac{b^{2}_{1}}{[2]_{p,q}-1}\left( 1-\dfrac{[3]_{p,q}-1}{[2]_{p,q}-1}\mu \right)|a_{2}|^{2} \right] \leq \dfrac{b_{1}}{[3]_{p,q}-1}.\\
	\end{equation}
	and if 
	$\sigma_{3}\leq \mu < \sigma_{2}$, then 
	\begin{equation}\label{56}
	|a_{3}-\mu a^{2}_{2}|+\dfrac{([2]_{p,q}-1)^{2}}{([3]_{p,q}-1)b^{2}_{1}}\left[ b_{1}+b_{2}+\dfrac{b^{2}_{1}}{[2]_{p,q}-1}\left( 1-\dfrac{[3]_{p,q}-1}{[2]_{p,q}-1}\mu \right)|a_{2}|^{2} \right] \leq \dfrac{b_{1}}{[3]_{p,q}-1}.\\
	\end{equation}
	
%	The result is sharp.\\
\end{thm}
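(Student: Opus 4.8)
The plan is to rerun the Ma--Minda argument from the proof of Theorem 2.1, but now feeding the coefficient functional into the sharpened Lemma 3.1 (with the three-branch bound (\ref{10}) and its improvements (\ref{100}), (\ref{101})) instead of Lemma 2.1. All the algebra needed is already in place: with $p(z)=1+c_1z+c_2z^2+\cdots$ built from the Schwarz function $w$ by $p=(1+w)/(1-w)$, equations (\ref{30}), (\ref{34}) and (\ref{35}) give $a_2=\tfrac{b_1c_1}{2([2]_{p,q}-1)}$, so that $|c_1|^2=\tfrac{4([2]_{p,q}-1)^2}{b_1^2}|a_2|^2$, together with $|a_3-\mu a_2^2|=\tfrac{b_1}{2([3]_{p,q}-1)}|c_2-v c_1^2|$, where $v$ is the quantity in (\ref{34}) and $|b_1|=b_1$ because $b_1>0$. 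Writing $v$ as an affine function of $\mu$, namely $v=\tfrac12\bigl(1-\tfrac{b_2}{b_1}-\tfrac{b_1}{[2]_{p,q}-1}\bigr)+\tfrac{b_1([3]_{p,q}-1)}{2([2]_{p,q}-1)^2}\mu$, shows that $v$ increases strictly with $\mu$ (again using $b_1>0$, together with the tacit positivity of $[2]_{p,q}-1$ and $[3]_{p,q}-1$). Solving $v\le0$, $v\le\tfrac12$, $v\le1$, $v\ge1$ for $\mu$ then identifies the breakpoints exactly: $v\le0\Leftrightarrow\mu\le\sigma_1$, $v\le\tfrac12\Leftrightarrow\mu\le\sigma_3$, $0\le v\le1\Leftrightarrow\sigma_1\le\mu\le\sigma_2$, $v\ge1\Leftrightarrow\mu\ge\sigma_2$, with $\sigma_1,\sigma_2,\sigma_3$ as in (\ref{50})--(\ref{53}); one also checks $\sigma_1<\sigma_3<\sigma_2$, since both gaps equal $([2]_{p,q}-1)^2/\bigl(([3]_{p,q}-1)b_1\bigr)>0$.

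For the three-line bound (\ref{58}), apply (\ref{10}) to $|c_2-vc_1^2|$ in each range. If $\mu\le\sigma_1$ then $v\le0$ and $|c_2-vc_1^2|\le2-4v$, whence $|a_3-\mu a_2^2|\le\tfrac{b_1}{[3]_{p,q}-1}(1-2v)$; substituting $1-2v=\tfrac{b_2}{b_1}+\tfrac{b_1}{[2]_{p,q}-1}-\tfrac{b_1([3]_{p,q}-1)}{([2]_{p,q}-1)^2}\mu$ and simplifying gives the first line of (\ref{58}). If $\sigma_1\le\mu\le\sigma_2$ then $0\le v\le1$ and $|c_2-vc_1^2|\le2$ gives at once the middle line $|a_3-\mu a_2^2|\le b_1/([3]_{p,q}-1)$. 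If $\mu\ge\sigma_2$ then $v\ge1$ and $|c_2-vc_1^2|\le4v-2$ gives $\tfrac{b_1}{[3]_{p,q}-1}(2v-1)$; since $2v-1=-(1-2v)$, the same substitution produces the third line.

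For the refinements (\ref{55}) and (\ref{56}), restrict to $0<v<1$, i.e.\ $\sigma_1<\mu<\sigma_2$, and split at $v=\tfrac12$, i.e.\ $\mu=\sigma_3$. For $\sigma_1<\mu\le\sigma_3$ (so $0<v\le\tfrac12$), take (\ref{100}), substitute $|c_1|^2=\tfrac{4([2]_{p,q}-1)^2}{b_1^2}|a_2|^2$ and $|c_2-vc_1^2|=\tfrac{2([3]_{p,q}-1)}{b_1}|a_3-\mu a_2^2|$, and multiply through by $b_1/\bigl(2([3]_{p,q}-1)\bigr)$; the $v|c_1|^2$ term becomes $\tfrac{2v([2]_{p,q}-1)^2}{b_1([3]_{p,q}-1)}|a_2|^2$, and replacing $2v=1-\tfrac{b_2}{b_1}-\tfrac{b_1}{[2]_{p,q}-1}\bigl(1-\tfrac{[3]_{p,q}-1}{[2]_{p,q}-1}\mu\bigr)$ yields (\ref{55}). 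For $\sigma_3\le\mu<\sigma_2$ (so $\tfrac12\le v<1$), the same manipulation applied to (\ref{101}) turns the $(1-v)|c_1|^2$ term into $\tfrac{2(1-v)([2]_{p,q}-1)^2}{b_1([3]_{p,q}-1)}|a_2|^2$, and $2(1-v)=1+\tfrac{b_2}{b_1}+\tfrac{b_1}{[2]_{p,q}-1}\bigl(1-\tfrac{[3]_{p,q}-1}{[2]_{p,q}-1}\mu\bigr)$ gives (\ref{56}).

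There is no conceptual obstacle; the work is purely the bookkeeping of a linear change of parameter. The one place where care is essential is converting the $v$-intervals of Lemma 3.1 into the $\mu$-intervals cut out by $\sigma_1,\sigma_2,\sigma_3$: this relies on $b_1>0$ to preserve inequality directions when clearing the positive factor $b_1([3]_{p,q}-1)/([2]_{p,q}-1)^2$, and on $[2]_{p,q}-1>0$, $[3]_{p,q}-1>0$ so that all denominators keep the right sign. After that one only has to keep the four linear substitutions for $1-2v$, $2v-1$, $2v$ and $2(1-v)$ aligned so that the resulting right-hand sides match (\ref{58}), (\ref{55}) and (\ref{56}) in exactly the stated forms.
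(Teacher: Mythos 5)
Your proposal is correct and follows essentially the same route as the paper: it reuses the expressions (\ref{30}), (\ref{34}), (\ref{35}) from the proof of Theorem 2.1, converts the $v$-ranges of Lemma 3.1 into the $\mu$-ranges cut out by $\sigma_1,\sigma_2,\sigma_3$, and applies the three-branch bound (\ref{10}) for (\ref{58}) and the refinements (\ref{100}), (\ref{101}) for (\ref{55}) and (\ref{56}). Your explicit observation that $v$ is affine and strictly increasing in $\mu$ (with $\sigma_3$ exactly midway between $\sigma_1$ and $\sigma_2$) is a small clarification the paper leaves implicit, but the argument is the same.
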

\vspace{5mm}
\begin{proof}[\textbf{Proof}]
	For $v\leq 0$, equation (\ref{34}) gives
\begin{equation*}
\mu \leq \dfrac{([2]_{p,q}-1)b^{2}_{1}+([2]_{p,q}-1)^{2}(b_{2}-b_{1})}{([3]_{p,q}-1)b^{2}_{1}}.\\
\end{equation*}

Let  $\dfrac{([2]_{p,q}-1)b^{2}_{1}+([2]_{p,q}-1)^{2}(b_{2}-b_{1})}{([3]_{p,q}-1)b^{2}_{1}}=\sigma _{1}$, then from the above relation, we have $\mu \leq \sigma_{1}$.\\

Let $p(z)$ be a function, given by equation (\ref{111}), with $\Re\left( p(z)\right)>0 $ and $f(z)$, given by equation (\ref{1}), be a member of the class  $\mathcal{S}^{*}_{p,q}(\phi)$, then equation (\ref{35}) holds. Thus using Lemma 3.1 for $v\leq 0$ in  equation (\ref{35}), we get
\begin{equation*}
|a_{3}-\mu a^{2}_{2}|\leq \dfrac{b_{1}}{2 ([3]_{p,q}-1)} (-4v+2),
\end{equation*}
which on using equation (\ref{34}), gives

\begin{equation}\label{57}
|a_{3}-\mu a^{2}_{2}|\leq \dfrac{b_{1}}{ [3]_{p,q}-1}\left(\dfrac{b_{2}}{b_{1}}+\dfrac{b_{1}}{[2]_{p,q}-1}\left(1-\dfrac{[3]_{p,q}-1}{[2]_{p,q}-1} \mu \right) \right),
\end{equation}
 where $\mu \leq \sigma_{1}$.\\
 
 Simplifying the right hand side of inequality  (\ref{57}), we get the first inequality of assertion (\ref{58}).\\

Again, if we take  $0\leq v\leq 1$, then equation (\ref{34}), gives

\begin{equation*}
\sigma_{1} \leq \mu \leq \dfrac{([2]_{p,q}-1)b^{2}_{1}+([2]_{p,q}-1)^{2}(b_{2}+b_{1})}{([3]_{p,q}-1)b^{2}_{1}},
\end{equation*}
where $\sigma_{1}$ is given by equation (\ref{50}).\\

Let  $\dfrac{([2]_{p,q}-1)b^{2}_{1}+([2]_{p,q}-1)^{2}(b_{2}-b_{1})}{([3]_{p,q}-1)b^{2}_{1}}=\sigma _{2}$, then from the above relation, we have $ \sigma_{1} \leq \mu \leq \sigma_{2}$.\\

Now, using Lemma 3.1 for $0\leq v\leq 1$ in equation (\ref{35}), we get
\begin{equation*}
|a_{3}-\mu a^{2}_{2}|\leq \dfrac{b_{1}}{ [3]_{p,q}-1},
\end{equation*}
which gives the second  inequality of assertion  (\ref{58}).\\

Next, if we take $ v\geq 1$, then equation (\ref{34}), gives that $\mu\geq \sigma_{2} $.\\

Now, using Lemma 3.1, for $ v\geq 1$  in equation (\ref{35}), we get
\begin{equation*}
|a_{3}-\mu a^{2}_{2}|\leq \dfrac{b_{1}}{2 ([3]_{p,q}-1)} (4v-2),
\end{equation*}
which on using equation (\ref{34}), gives
\begin{equation}\label{60}
|a_{3}-\mu a^{2}_{2}| \leq \dfrac{b_{1}}{[3]_{p,q}-1}\left(-\dfrac{b_{2}}{b_{1}}-\dfrac{b_{1}}{[2]_{p,q}-1}\left(1-\dfrac{[3]_{p,q}-1}{[2]_{p,q}-1} \mu \right) \right).\\
\end{equation}

Inequality  (\ref{60}) gives the third inequality of assertion (\ref{58}).\\

Further, if $0<v\leq \dfrac{1}{2}$, then using equation (\ref{34}), we have
\begin{equation*}
0	<\dfrac{1}{2}\left( 1-\dfrac{b_{2}}{b_{1}}-\dfrac{b_{1}}{[2]_{p,q}-1}\left(1-\dfrac{[3]_{p,q}-1}{[2]_{p,q}-1} \mu \right) \right)\leq \dfrac{1}{2},
\end{equation*}
which on simplifying, gives 
\begin{equation}\label{112}
\sigma_{1}	<	\mu \leq 	\dfrac{([2]_{p,q}-1)b^{2}_{1}+([2]_{p,q}-1)^{2} b_{2}}{([3]_{p,q}-1)b^{2}_{1}},
\end{equation}
where $\sigma _{1}$ is given by equation (\ref{50}).\\

Let $\dfrac{([2]_{p,q}-1)b^{2}_{1}+([2]_{p,q}-1)^{2} b_{2}}{([3]_{p,q}-1)b^{2}_{1}}=\sigma _{3}$, then from relation (\ref{112}), we have $ \sigma_{1} < \mu \leq \sigma_{3}$.\\

Now, using equations (\ref{30}) and (\ref{50}), we get
\begin{equation}\label {63}
\begin{split}
&|a_{3}-\mu a^{2}_{2}|+(\mu - \sigma_{1})|a_{2}|^{2}=|a_{3}-\mu a^{2}_{2}|\\
&\hspace{40mm}+\left( \mu - \dfrac{([2]_{p,q}-1)b^{2}_{1}+([2]_{p,q}-1)^{2}(b_{2}-b_{1})}{([3]_{p,q}-1)b^{2}_{1}} \right) \dfrac{b^{2}_{1}|c_{1}|^{2}}{4([2]_{p,q}-1)^{2}},\\
\end{split}
\end{equation}
which on using equation (\ref{35}), we get 
\begin{equation}\label{113}
|a_{3}-\mu a^{2}_{2}|+(\mu - \sigma_{1})|a_{2}|^{2}=\dfrac{b_{1}}{2 ([3]_{p,q}-1)}\left( |c_{2}-vc^{2}_{1}|+\dfrac{1}{2}\left( 1 - \dfrac{b_{2}}{b_{1}}- \dfrac{b_{1}}{[2]_{p,q}-1}\left( 1-\dfrac{[3]_{p,q}-1}{[2]_{p,q}-1}\mu \right)\right)  |c_{1}|^{2}\right).\\
\end{equation}

Using equation (\ref{34}) in equation (\ref{113}), we get
\begin{equation*}
|a_{3}-\mu a^{2}_{2}|+(\mu - \sigma_{1})|a_{2}|^{2}=\dfrac{b_{1}}{ [3]_{p,q}-1}\left( \dfrac{1}{2}\left(  |c_{2}-vc^{2}_{1}|+v|c_{1}|^{2}  \right) \right),
\end{equation*}
which in view of inequality (\ref{100}), gives
\begin{equation}\label{65}
|a_{3}-\mu a^{2}_{2}|+(\mu - \sigma_{1})|a_{2}|^{2} \leq \dfrac{b_{1}}{ [3]_{p,q}-1}.\\
\end{equation}

Now, using inequality (\ref{65}) in equation (\ref{63}), we get
\begin{equation*}
|a_{3}-\mu a^{2}_{2}|+\left( \mu - \dfrac{([2]_{p,q}-1)b^{2}_{1}+([2]_{p,q}-1)^{2}(b_{2}-b_{1})}{([3]_{p,q}-1)b^{2}_{1}} \right)|a_{2}|^{2}\leq \dfrac{b_{1}}{ [3]_{p,q}-1},
\end{equation*} 
where $ \sigma_{1} < \mu \leq \sigma_{3}$.\\

Simplifying the above inequality, we obtain the assertion (\ref{55}).\\

Similarly, if  $\dfrac{1}{2}\leq v < 1$, then using equation (\ref{34}), we get $ \sigma_{3} \leq \mu < \sigma_{2}$, where $ \sigma_{2}$ and $\sigma_{3}$ are given by equations (\ref{51}) and (\ref{53}), respectively.  \\

Now, using equations (\ref{30}) and (\ref{51}), we get
\begin{equation}\label{64}
\begin{split}
&|a_{3}-\mu a^{2}_{2}|+(\sigma_{2}-\mu )|a_{2}|^{2}=|a_{3}-\mu a^{2}_{2}| \\
&\hspace{40mm}+\left( \dfrac{([2]_{p,q}-1)b^{2}_{1}+([2]_{p,q}-1)^{2}(b_{2}+b_{1})}{([3]_{p,q}-1)b^{2}_{1}} -\mu\right)  \dfrac{b^{2}_{1}|c_{1}|^{2}}{4([2]_{p,q}-1)^{2}}, \\
\end{split}
\end{equation}

Using equation (\ref{35}) in equation (\ref{64}), we get
\begin{equation}\label {103}
|a_{3}-\mu a^{2}_{2}|+(\sigma_{2}-\mu )|a_{2}|^{2}=\dfrac{b_{1}}{2 ([3]_{p,q}-1)}\left( |c_{2}-vc^{2}_{1}|+\dfrac{1}{2}\left( 1+\dfrac{b_{2}}{b_{1}}+\dfrac{b_{1}}{[2]_{p,q}-1}\left( 1-\dfrac{[3]_{p,q}-1}{[2]_{p,q}-1} \mu\right) \right) |c_{1}|^{2}\right),
\end{equation}

which, on using equation (\ref{34}) gives
\begin{equation}\label{80}
|a_{3}-\mu a^{2}_{2}|+(  \sigma_{2}-\mu)|a_{2}|^{2}=\dfrac{b_{1}}{ [3]_{p,q}-1}\left( \dfrac{1}{2}\left(  |c_{2}-vc^{2}_{1}|+(1-v)|c_{1}|^{2}  \right) \right).\\
\end{equation}

Now, since $\dfrac{1}{2}\leq v < 1$, therefore using inequality (\ref{101}) of Lemma 3.1, equation (\ref{80}) gives

\begin{equation}\label{81}
|a_{3}-\mu a^{2}_{2}|+(\sigma_{2}-\mu)|a_{2}|^{2} \leq \dfrac{b_{1}}{ [3]_{p,q}-1}.\\
\end{equation}

Using inequality (\ref{81}) in  equation (\ref{64}), we get
\begin{equation*}
|a_{3}-\mu a^{2}_{2}|+\left(  \dfrac{([2]_{p,q}-1)b^{2}_{1}+([2]_{p,q}-1)^{2}(b_{2}+b_{1})}{([3]_{p,q}-1)b^{2}_{1}}- \mu \right) |a_{2}|^{2}\leq \dfrac{b_{1}}{ [3]_{p,q}-1},
\end{equation*}
where $ \sigma_{3} \leq \mu <\sigma_{2}$.\\

Finally, on simplifying the above inequality, we obtain the assertion (\ref{56}).\\
\end{proof}

Taking $p=1$ in Theorem 3.1, we get the following corollary for the class $\mathcal{S}^{*}_{q}(\phi)$:\\

\begin{cor}
	Let $\phi (z)=1+b_{1}z+b_{2}z^{2} \dots $ with $b_{1}> 0$ and $b_{2}\geq 0$. Let
\begin{align}
\sigma _{1}&=\dfrac{([2]_{q}-1)b^{2}_{1}+([2]_{q}-1)^{2}(b_{2}-b_{1})}{([3]_{q}-1)b^{2}_{1}},\\
\sigma _{2}&=\dfrac{([2]_{q}-1)b^{2}_{1}+([2]_{q}-1)^{2}(b_{2}+b_{1})}{([3]_{q}-1)b^{2}_{1}},\\
\sigma _{3}&=\dfrac{([2]_{q}-1)b^{2}_{1}+([2]_{q}-1)^{2} b_{2}}{([3]_{q}-1)b^{2}_{1}}.
\end{align}
If $f$, given by equation (\ref{1}),  belongs to the class $S^{*}_{q}(\phi)$, then
\begin{eqnarray}
|a_{3}-\mu a^{2}_{2}|\leq \left\{
\begin{array}{lll}
\dfrac{b_{2}}{[3]_{q}-1}+\dfrac{b^{2}_{1}}{[2]_{q}-1}\left(\dfrac{1}{[3]_{q}-1}-\dfrac{\mu}{[2]_{q}-1}\right)  , &if ~~ \mu \leq \sigma_{1};
\\
\\
\dfrac{b_{1}}{[3]_{q}-1}, & if ~~ \sigma_{1}\leq \mu \leq \sigma_{2};
\\
\\
\dfrac{-b_{2}}{[3]_{q}-1}-\dfrac{b^{2}_{1}}{[2]_{q}-1}\left(\dfrac{1}{[3]_{q}-1}-\dfrac{\mu}{[2]_{q}-1}\right) , & if~~ \mu \geq \sigma_{2}.
\end{array}\right.
\end{eqnarray}\\ 

Further, if $\sigma_{1}< \mu \leq \sigma_{3}$, then 
\begin{equation}
|a_{3}-\mu a^{2}_{2}|+\dfrac{([2]_{q}-1)^{2}}{([3]_{q}-1)b^{2}_{1}}\left[ b_{1}-b_{2}-\dfrac{b^{2}_{1}}{[2]_{q}-1}\left( 1-\dfrac{[3]_{q}-1}{[2]_{q}-1}\mu \right)|a_{2}|^{2} \right] \leq \dfrac{b_{1}}{[3]_{q}-1}\\
\end{equation}
and if 
$\sigma_{3}\leq \mu < \sigma_{2}$, then 
\begin{equation}
|a_{3}-\mu a^{2}_{2}|+\dfrac{([2]_{q}-1)^{2}}{([3]_{q}-1)b^{2}_{1}}\left[ b_{1}+b_{2}+\dfrac{b^{2}_{1}}{[2]_{q}-1}\left( 1-\dfrac{[3]_{q}-1}{[2]_{q}-1}\mu \right)|a_{2}|^{2} \right] \leq \dfrac{b_{1}}{[3]_{q}-1}.\\
\end{equation}

%The result is sharp.\\	
\end{cor}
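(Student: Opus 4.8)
The plan is to run exactly the reduction used in the proof of Theorem 2.1 and then apply the sharper coefficient estimate of Lemma 3.1 in place of the cruder one of Lemma 2.1. Let $f\in\mathcal{S}^{*}_{p,q}(\phi)$ and introduce, as in equation (\ref{111}), the function $p(z)=1+c_{1}z+c_{2}z^{2}+\cdots$ with $\Re\big(p(z)\big)>0$ associated with the Schwarz function in the subordination (\ref{21}). Then equations (\ref{30}) and (\ref{35}) give
\begin{equation*}
|a_{3}-\mu a^{2}_{2}|=\dfrac{b_{1}}{2\,([3]_{p,q}-1)}\,\big|c_{2}-v c^{2}_{1}\big|,\qquad |a_{2}|^{2}=\dfrac{b^{2}_{1}\,|c_{1}|^{2}}{4\,([2]_{p,q}-1)^{2}},
\end{equation*}
with $v$ the quantity defined in equation (\ref{34}). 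So the whole proof reduces to (i) translating the intervals for $v$ appearing in Lemma 3.1 into intervals for $\mu$, and (ii) substituting the corresponding estimates for $|c_{2}-v c_{1}^{2}|$, or its refinements (\ref{100})--(\ref{101}), back into these two identities.

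For step (i), since $b_{1}>0$ and $[2]_{p,q}-1$, $[3]_{p,q}-1$ are positive, the map $\mu\mapsto v$ in (\ref{34}) is affine and strictly increasing; solving $v=0$, $v=1$ and $v=\tfrac12$ for $\mu$ returns precisely $\sigma_{1}$, $\sigma_{2}$, $\sigma_{3}$ as written in (\ref{50}), (\ref{51}), (\ref{53}), and one checks $\sigma_{1}<\sigma_{3}<\sigma_{2}$. Hence $v\le0\iff\mu\le\sigma_{1}$, $\;0\le v\le1\iff\sigma_{1}\le\mu\le\sigma_{2}$, $\;v\ge1\iff\mu\ge\sigma_{2}$, while $0<v\le\tfrac12\iff\sigma_{1}<\mu\le\sigma_{3}$ and $\tfrac12\le v<1\iff\sigma_{3}\le\mu<\sigma_{2}$.

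For the three-branch bound (\ref{58}) I would insert the three cases of (\ref{10}) into the first displayed identity: for $\mu\le\sigma_{1}$ (so $v\le0$) this gives $|a_{3}-\mu a^{2}_{2}|\le\frac{b_{1}}{2([3]_{p,q}-1)}(2-4v)$, and replacing $v$ by (\ref{34}) and simplifying produces the first line of (\ref{58}); for $\sigma_{1}\le\mu\le\sigma_{2}$ the bound $2$ gives the constant $\frac{b_{1}}{[3]_{p,q}-1}$; and for $\mu\ge\sigma_{2}$ the bound $4v-2$ gives, after the same substitution, the third line.

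For the refinements (\ref{55}) and (\ref{56}) I would combine both identities. When $\sigma_{1}<\mu\le\sigma_{3}$, i.e. $0<v\le\tfrac12$, the defining relations force $(\mu-\sigma_{1})\,|a_{2}|^{2}=\frac{b_{1}}{2([3]_{p,q}-1)}\,v\,|c_{1}|^{2}$, so that
\begin{equation*}
|a_{3}-\mu a^{2}_{2}|+(\mu-\sigma_{1})\,|a_{2}|^{2}=\dfrac{b_{1}}{2\,([3]_{p,q}-1)}\Big(\big|c_{2}-v c^{2}_{1}\big|+v\,|c_{1}|^{2}\Big)\le\dfrac{b_{1}}{[3]_{p,q}-1}
\end{equation*}
by inequality (\ref{100}), and replacing $\sigma_{1}$ with its value (\ref{50}) rearranges this into (\ref{55}). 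The case $\sigma_{3}\le\mu<\sigma_{2}$, i.e. $\tfrac12\le v<1$, is symmetric, with $(\sigma_{2}-\mu)\,|a_{2}|^{2}=\frac{b_{1}}{2([3]_{p,q}-1)}\,(1-v)\,|c_{1}|^{2}$ and inequality (\ref{101}) used in place of (\ref{100}), yielding (\ref{56}). The one step that is not purely mechanical is this coefficient matching — verifying that $\mu-\sigma_{1}$ (respectively $\sigma_{2}-\mu$) rescales $|a_{2}|^{2}$ by exactly the factor needed to convert (\ref{100}) (respectively (\ref{101})) into the asserted inequality; it is this requirement that pins down the precise form of $\sigma_{1}$, $\sigma_{2}$, $\sigma_{3}$. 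Everything else is substitution from (\ref{34}) and algebraic tidying.
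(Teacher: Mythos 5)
Your argument is correct and is, in substance, identical to the paper's proof of Theorem 3.1: the same reduction via equations (\ref{30}), (\ref{34}), (\ref{35}), the same translation of the $v$-ranges of Lemma 3.1 into the $\mu$-ranges $\sigma_{1},\sigma_{2},\sigma_{3}$, and the same matching of $(\mu-\sigma_{1})|a_{2}|^{2}$ and $(\sigma_{2}-\mu)|a_{2}|^{2}$ with the correction terms in (\ref{100}) and (\ref{101}). The paper obtains the corollary itself in one line by setting $p=1$ in Theorem 3.1 (so that $[n]_{p,q}$ becomes $[n]_{q}$), whereas you re-derive the whole theorem; the mathematics is the same, you only omit the explicit final specialization $p=1$.
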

\vspace{5mm}
Next, we obtain the coefficient bound for the functions belonging to the class  $\mathcal{C}_{p,q}(\phi)$ :\\

\begin{thm}
	Let $\phi (z)=1+b_{1}z+b_{2}z^{2} \dots $ with $b_{1}> 0$ and $b_{2}\geq 0$. Let
	\begin{align}
	\rho _{1}&=\dfrac{[2]_{p,q}^{2}([2]_{p,q}-1)b^{2}_{1}+([2]_{p,q}[2]_{p,q}-1)^{2}(b_{2}-b_{1})}{[3]_{p,q}([3]_{p,q}-1)b^{2}_{1}},\label{66}\\
	\rho _{2}&=\dfrac{[2]_{p,q}^{2}([2]_{p,q}-1)b^{2}_{1}+([2]_{p,q}[2]_{p,q}-1)^{2}(b_{2}+b_{1})}{[3]_{p,q}([3]_{p,q}-1)b^{2}_{1}},\label{67}\\
	\rho _{3}&=\dfrac{[2]_{p,q}^{2}([2]_{p,q}-1)b^{2}_{1}+([2]_{p,q}[2]_{p,q}-1)^{2} b_{2}}{[3]_{p,q}([3]_{p,q}-1)b^{2}_{1}}.\label{68}
	\end{align}
	If $f$, given by equation (\ref{1}), belongs to the class $\mathcal{C}_{p,q}(\phi)$, then
	\begin{eqnarray}\label{69}
	|a_{3}-\mu a^{2}_{2}|\leq \left\{
	\begin{array}{lll}
	\dfrac{b_{2}}{[3]_{p,q}([3]_{p,q}-1)}+\dfrac{b^{2}_{1}}{[2]_{p,q}-1}\left(\dfrac{1}{[3]_{p,q}([3]_{p,q}-1)}-\dfrac{\mu}{[2]^{2}_{p,q}([2]_{p,q}-1)}\right)  , &if ~~ \mu \leq \rho_{1};
	\\
	\\
	\dfrac{b_{1}}{[3]_{p,q}([3]_{p,q}-1)}, & if ~~ \rho_{1}\leq \mu \leq \rho_{2};
	\\
	\\
	\dfrac{-b_{2}}{[3]_{p,q}([3]_{p,q}-1)}-\dfrac{b^{2}_{1}}{[2]_{p,q}-1}\left(\dfrac{1}{[3]_{p,q}([3]_{p,q}-1)}-\dfrac{\mu}{[2]^{2}_{p,q}([2]_{p,q}-1)}\right ) , & if~~ \mu \geq \rho_{2}.
	\end{array}\right.
	\end{eqnarray}\\ 
	
	Further, if $\rho_{1}< \mu \leq \rho_{3}$, then 
	\begin{equation}\label{70}
	|a_{3}-\mu a^{2}_{2}|+\dfrac{[2]^{2}_{p,q}([2]_{p,q}-1)^{2}}{[3]_{p,q}([3]_{p,q}-1)b^{2}_{1}}\left[ b_{1}-b_{2}-\dfrac{b^{2}_{1}}{[2]_{p,q}-1}\left( 1-\dfrac{[3]_{p,q}([3]_{p,q}-1)}{[2]^{2}_{p,q}([2]_{p,q}-1)}\mu \right)|a_{2}|^{2} \right] \leq \dfrac{b_{1}}{[3]_{p,q}([3]_{p,q}-1)}\\
	\end{equation}
	and if 
	$\rho_{3}\leq \mu <\rho_{2}$, then 
	\begin{equation}\label{71}
	|a_{3}-\mu a^{2}_{2}|+\dfrac{[2]^{2}_{p,q}([2]_{p,q}-1)^{2}}{[3]_{p,q}([3]_{p,q}-1)b^{2}_{1}}\left[ b_{1}+b_{2}+\dfrac{b^{2}_{1}}{[2]_{p,q}-1}\left( 1-\dfrac{[3]_{p,q}([3]_{p,q}-1)}{[2]^{2}_{p,q}([2]_{p,q}-1)}\mu \right)|a_{2}|^{2} \right] \leq \dfrac{b_{1}}{[3]_{p,q}([3]_{p,q}-1)}.\\
	\end{equation}

\end{thm}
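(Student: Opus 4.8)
The plan is to mirror the proof of Theorem 3.1 exactly, replacing the starlike coefficient expressions by the convex ones obtained in the proof of Theorem 2.2. Recall from equations (\ref{38}) and (\ref{40}) that for $f\in\mathcal{C}_{p,q}(\phi)$ one has
\[
a_{2}=\dfrac{b_{1}c_{1}}{2\,[2]_{p,q}([2]_{p,q}-1)},\qquad
|a_{3}-\mu a_{2}^{2}|=\dfrac{|b_{1}|}{2\,[3]_{p,q}([3]_{p,q}-1)}\,|c_{2}-v c_{1}^{2}|,
\]
where, by equation (\ref{41}),
\[
v=\dfrac{1}{2}\left(1-\dfrac{b_{2}}{b_{1}}-\dfrac{b_{1}}{[2]_{p,q}-1}\left(1-\dfrac{[3]_{p,q}([3]_{p,q}-1)}{[2]^{2}_{p,q}([2]_{p,q}-1)}\mu\right)\right).
\]
First I would solve the linear relation $v=v(\mu)$ for $\mu$, so that the conditions $v\le 0$, $0\le v\le 1$, $v\ge 1$, $0<v\le\frac12$, $\frac12\le v<1$ translate into conditions $\mu\le\rho_{1}$, $\rho_{1}\le\mu\le\rho_{2}$, $\mu\ge\rho_{2}$, $\rho_{1}<\mu\le\rho_{3}$, $\rho_{3}\le\mu<\rho_{2}$ with $\rho_{1},\rho_{2},\rho_{3}$ as defined in (\ref{66})--(\ref{68}); this is a routine rearrangement and should reproduce those three thresholds (modulo the evident typo $[2]_{p,q}[2]_{p,q}-1$ which should read $[2]_{p,q}([2]_{p,q}-1)$ in the numerators).

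Next I would feed the three-branch estimate of Lemma 3.1 into $|a_{3}-\mu a_{2}^{2}|=\frac{|b_{1}|}{2[3]_{p,q}([3]_{p,q}-1)}|c_{2}-vc_{1}^{2}|$. In the range $\mu\le\rho_{1}$ (so $v\le0$) use $|c_{2}-vc_{1}^{2}|\le -4v+2$ and substitute the expression for $v$; simplifying gives the first branch of (\ref{69}). In the range $\rho_{1}\le\mu\le\rho_{2}$ (so $0\le v\le1$) use $|c_{2}-vc_{1}^{2}|\le 2$ to get the flat bound $\frac{b_{1}}{[3]_{p,q}([3]_{p,q}-1)}$, the second branch. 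In the range $\mu\ge\rho_{2}$ (so $v\ge1$) use $|c_{2}-vc_{1}^{2}|\le 4v-2$ to get the third branch. Each of these is a one-line algebraic simplification identical in structure to equations (\ref{57}) and (\ref{60}) in the proof of Theorem 3.1.

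For the refined inequalities (\ref{70}) and (\ref{71}) I would use the improved bounds (\ref{100}) and (\ref{101}) of Lemma 3.1. In the case $0<v\le\frac12$, i.e. $\rho_{1}<\mu\le\rho_{3}$, form $|a_{3}-\mu a_{2}^{2}|+(\mu-\rho_{1})|a_{2}|^{2}$; using $|a_{2}|^{2}=\frac{b_{1}^{2}|c_{1}|^{2}}{4[2]^{2}_{p,q}([2]_{p,q}-1)^{2}}$ together with (\ref{41}) and the definition of $\rho_{1}$, the left side collapses to $\frac{b_{1}}{2[3]_{p,q}([3]_{p,q}-1)}\big(|c_{2}-vc_{1}^{2}|+v|c_{1}|^{2}\big)\le\frac{b_{1}}{[3]_{p,q}([3]_{p,q}-1)}$ by (\ref{100}); then rewriting $\mu-\rho_{1}$ back in closed form yields (\ref{70}). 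Symmetrically, for $\frac12\le v<1$, i.e. $\rho_{3}\le\mu<\rho_{2}$, form $|a_{3}-\mu a_{2}^{2}|+(\rho_{2}-\mu)|a_{2}|^{2}$ and apply (\ref{101}) to obtain (\ref{71}). The only real bookkeeping obstacle is carrying the factor $[2]^{2}_{p,q}([2]_{p,q}-1)^{2}$ (rather than $([2]_{p,q}-1)^{2}$ as in Theorem 3.1) correctly through the identification of $\rho_{1},\rho_{2},\rho_{3}$ and through the weight multiplying $|a_{2}|^{2}$; once that factor is tracked consistently, everything reduces to the same manipulations already performed for the starlike case.
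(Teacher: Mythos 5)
Your proposal is correct and follows essentially the same route as the paper's own proof: it uses the coefficient relations for $a_{2}$ and $|a_{3}-\mu a_{2}^{2}|$ from the proof of Theorem 2.2, translates the ranges $v\leq 0$, $0\leq v\leq 1$, $v\geq 1$, $0<v\leq\tfrac12$, $\tfrac12\leq v<1$ into the thresholds $\rho_{1},\rho_{2},\rho_{3}$, and then applies the three-branch bound and the refined bounds of Lemma 3.1 exactly as the paper does. You also correctly flag the typographical issue in the numerators of $\rho_{1},\rho_{2},\rho_{3}$, where $([2]_{p,q}[2]_{p,q}-1)^{2}$ should be read as $[2]_{p,q}^{2}([2]_{p,q}-1)^{2}$ to be consistent with the weight appearing in the refined inequalities.
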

\begin{proof}[\textbf{Proof}]
For $v\leq 0$, equation (\ref{41}) gives
	\begin{equation*}
	\mu \leq \dfrac{[2]_{p,q}^{2}([2]_{p,q}-1)b^{2}_{1}+([2]_{p,q}[2]_{p,q}-1)^{2}(b_{2}-b_{1})}{[3]_{p,q}([3]_{p,q}-1)b^{2}_{1}}.
	\end{equation*}
	Let  $\dfrac{[2]_{p,q}^{2}([2]_{p,q}-1)b^{2}_{1}+([2]_{p,q}[2]_{p,q}-1)^{2}(b_{2}-b_{1})}{[3]_{p,q}([3]_{p,q}-1)b^{2}_{1}}=\rho_{1}$, then from the above relation we have $\mu \leq \rho_{1}$.\\
	
	Let $p(z)$ be a function given by equation (\ref{111}) with $\Re\left( p(z)\right)>0 $ and $f(z)$, given by equation (\ref{1}), be a member of the class $\mathcal{C}_{p,q}(\phi)$, then equation (\ref{42}) holds. Thus, using Lemma 3.1,  for $v\leq 0$, in  equation (\ref{42}), we get
	\begin{equation*}
	|a_{3}-\mu a^{2}_{2}|\leq \dfrac{b_{1}}{2 [3]_{p,q}([3]_{p,q}-1)}(-4v+2),
	\end{equation*}
	which on using equation (\ref{41}), gives
	\begin{equation}\label{72}
	|a_{3}-\mu a^{2}_{2}|\leq \dfrac{b_{1}}{[3]_{p,q} ([3]_{p,q}-1)}\left(\dfrac{b_{2}}{b_{1}}+\dfrac{b_{1}}{[2]_{p,q}-1}\left(1-\dfrac{[3]_{p,q}([3]_{p,q}-1)}{[2]^{2}_{p,q}([2]_{p,q}-1)} \mu \right) \right),
	\end{equation}
	where $\mu \leq \rho_{1}$.\\

Inequality  (\ref{72}) gives the first inequality of assertion (\ref{69}).\\
	
Again, if we take $0\leq v\leq 1$, then equation (\ref{41}) gives
	\begin{equation*}
	\rho_{1} \leq \mu \leq \dfrac{[2]_{p,q}^{2}([2]_{p,q}-1)b^{2}_{1}+([2]_{p,q}[2]_{p,q}-1)^{2}(b_{2}+b_{1})}{[3]_{p,q}([3]_{p,q}-1)b^{2}_{1}}.\\
	\end{equation*}
	
	Let $\dfrac{[2]_{p,q}^{2}([2]_{p,q}-1)b^{2}_{1}+([2]_{p,q}[2]_{p,q}-1)^{2}(b_{2}+b_{1})}{[3]_{p,q}([3]_{p,q}-1)b^{2}_{1}}=\rho _{2}$, then $  \rho_{1} \leq\mu \leq \rho_{2}$, 	where $\rho_{1}$ is given by equation (\ref{66}).\\
	
Now, using  Lemma 3.1, for $0\leq v \leq 1$,  in equation (\ref{42}), we get
	\begin{equation*}
	|a_{3}-\mu a^{2}_{2}|\leq \dfrac{b_{1}}{ [3]_{p,q}([3]_{p,q}-1)},
	\end{equation*}
	which gives the second inequality of assertion (\ref{69}).\\

Next, if we take $ v\geq 1$, then equation (\ref{41}) gives that $\mu\geq \rho_{2}$.\\

Now, using Lemma 3.1, for $v\geq 1$ in equation (\ref{42}), we get
\begin{equation*}
	|a_{3}-\mu a^{2}_{2}|\leq \dfrac{|b_{1}|}{2 [3]_{p,q}([3]_{p,q}-1)}(4v-2),
\end{equation*}
which on using equation (\ref{41}) gives
	\begin{equation}\label{73}
	|a_{3}-\mu a^{2}_{2}| \leq \dfrac{b_{1}}{ [3]_{p,q}([3]_{p,q}-1)}\left(-\dfrac{b_{2}}{b_{1}}-\dfrac{b_{1}}{[2]_{p,q}-1}\left(1-\dfrac{[3]_{p,q}([3]_{p,q}-1)}{[2]^{2}_{p,q}([2]_{p,q}-1)} \mu \right) \right),
	\end{equation}
	where $\mu \geq \rho_{2}$.\\
	
Simplifying the right hand side of inequality  (\ref{73}), we get the third  inequality of assertion (\ref{69}).\\

	Further, if $ 0 <v\leq \dfrac{1}{2}$, then using equation (\ref{41}), we have
	\begin{equation*}
	0	<\dfrac{1}{2}\left( 1-\dfrac{b_{2}}{b_{1}}-\dfrac{b_{1}}{[2]_{p,q}-1}\left(1-\dfrac{[3]_{p,q}([3]_{p,q}-1)}{[2]^{2}_{p,q}([2]_{p,q}-1)} \mu \right) \right)\leq \dfrac{1}{2},
	\end{equation*}
	which on simplifying, gives 
	\begin{equation}\label{102}
	\rho_{1}	<	\mu \leq 	\dfrac{[2]_{p,q}^{2}([2]_{p,q}-1)b^{2}_{1}+([2]_{p,q}[2]_{p,q}-1)^{2} b_{2}}{[3]_{p,q}([3]_{p,q}-1)b^{2}_{1}}.\\
	\end{equation}
		
	Let
$\dfrac{[2]_{p,q}^{2}([2]_{p,q}-1)b^{2}_{1}+([2]_{p,q}[2]_{p,q}-1)^{2} b_{2}}{[3]_{p,q}([3]_{p,q}-1)b^{2}_{1}}=\rho _{3}$, then from (\ref{102}), we have $ \rho_{1} < \mu \leq \rho_{3}$, where $\rho _{1}$ is given by (\ref{66}).\\
	
	Now, using equations (\ref{38}) and (\ref{66}), we get

	\begin{equation}\label {75}
	\begin{split}
		&|a_{3}-\mu a^{2}_{2}|+(\mu - \rho_{1})|a_{2}|^{2}=|a_{3}-\mu a^{2}_{2}|\\
	&\hspace{30mm}	+\left( \mu - \dfrac{[2]_{p,q}^{2}([2]_{p,q}-1)b^{2}_{1}+([2]_{p,q}[2]_{p,q}-1)^{2}(b_{2}-b_{1})}{[3]_{p,q}([3]_{p,q}-1)b^{2}_{1}}\right)  \dfrac{b^{2}_{1}|c_{1}|^{2}}{4[2]^{2}_{p,q}([2]_{p,q}-1)^{2}}, \\
	\end{split}
	\end{equation}
which on using equation (\ref{42}), we get
	\begin{equation}\label{76}
		\begin{split}
	&	|a_{3}-\mu a^{2}_{2}|+(\mu - \rho_{1})|a_{2}|^{2}=\\
	& \hspace{15mm}\dfrac{b_{1}}{ 2 [3]_{p,q}([3]_{p,q}-1)} \left(  |c_{2}-vc^{2}_{1}|+\dfrac{1}{2}\left( 1-\dfrac{b_{2}}{b_{1}}-\dfrac{b_{1}}{[2]_{p,q}-1}\left(1-\dfrac{[3]_{p,q}([3]_{p,q}-1)}{[2]^{2}_{p,q}([2]_{p,q}-1)} \mu \right) \right)|c_{1}|^{2}  \right),\\
		\end{split}
	\end{equation}

Again, using equation (\ref{41}) in equation (\ref{76}), we have
\begin{equation*}
|a_{3}-\mu a^{2}_{2}|+(\mu - \rho_{1})|a_{2}|^{2}=\dfrac{b_{1}}{  [3]_{p,q}([3]_{p,q}-1)}\left( \dfrac{1}{2}(|c_{2}-vc^{2}_{1}|+v|c_{1}|^{2})\right),
\end{equation*}
which in view of inequality (\ref{100}) gives
	\begin{equation}\label{77}
	|a_{3}-\mu a^{2}_{2}|+(\mu - \rho_{1})|a_{2}|^{2} \leq \dfrac{b_{1}}{ [3]_{p,q}([3]_{p,q}-1)}.\\
	\end{equation}
	
Now, using equation (\ref{38}) and  inequality (\ref{77}) in equation (\ref{75}), we get
	\begin{equation*}
	|a_{3}-\mu a^{2}_{2}|+\left( \mu - \dfrac{[2]_{p,q}^{2}([2]_{p,q}-1)b^{2}_{1}+([2]_{p,q}[2]_{p,q}-1)^{2}(b_{2}-b_{1})}{[3]_{p,q}([3]_{p,q}-1)b^{2}_{1}}\right)  |a_{2}|^{2}\leq \dfrac{b_{1}}{[3]_{p,q} ([3]_{p,q}-1)}.\\
	\end{equation*}
	
Simplifying the above inequality, we obtain the assertion (\ref{70}).\\
	
	Similarly, if $\dfrac{1}{2} \leq v <1$, then using equation (\ref{41}), we get   $ \rho_{3} \leq \mu < \rho_{2}$.\\
	
	Now, using equations (\ref{38}) and  (\ref{67}), we get
	\begin{equation}\label {82}
\begin{split}
	&|a_{3}-\mu a^{2}_{2}|+( \rho_{2}-\mu)|a_{2}|^{2}=\\
	&\hspace{15mm}|a_{3}-\mu a^{2}_{2}|+\left( \dfrac{[2]_{p,q}^{2}([2]_{p,q}-1)b^{2}_{1}+([2]_{p,q}[2]_{p,q}-1)^{2}(b_{2}+b_{1})}{[3]_{p,q}([3]_{p,q}-1)b^{2}_{1}}-\mu  \right) \dfrac{b^{2}_{1}|c_{1}|^{2}}{4[2]^{2}_{p,q}([2]_{p,q}-1)^{2}},\\
\end{split}
	\end{equation}
Using equation (\ref{42}) in equation  (\ref{82}) and then simplifying, we get
\begin{equation*}
\begin{split}
&|a_{3}-\mu a^{2}_{2}|+( \rho_{2}-\mu)|a_{2}|^{2}=\\
&\hspace{15mm}\dfrac{b_{1}}{ 2[3]_{p,q}([3]_{p,q}-1)}\left( |c_{2}-vc^{2}_{1}|+\dfrac{1}{2}\left(1+\dfrac{b_{2}}{b_{1}}+\dfrac{b_{1}}{[2]_{p,q}-1}\left( 1-\dfrac{[3]_{p,q}([3]_{p,q}-1)}{[2]_{p,q}^{2}([2]_{p,q}-1)}\mu\right)|c_{1}|^{2} \right) \right)  ,\\
\end{split}
\end{equation*}
which on using equation (\ref{41}), gives
	\begin{equation}\label{84}
	|a_{3}-\mu a^{2}_{2}|+( \rho_{2}-\mu)|a_{2}|^{2}=\dfrac{|b_{1}|}{ [3]_{p,q}([3]_{p,q}-1)}\left( \dfrac{1}{2}\left(  |c_{2}-vc^{2}_{1}|+(1-v)|c_{1}|^{2}  \right) \right).\\
	\end{equation}
	
	Now, since  $\dfrac{1}{2} \leq v <1$, therefore using inequality (\ref{101}) of Lemma 3.1 in equation (\ref{84}), we get
	\begin{equation}\label{85}
	|a_{3}-\mu a^{2}_{2}|+( \rho_{2}-\mu)|a_{2}|^{2} \leq \dfrac{b_{1}}{[3]_{p,q} ([3]_{p,q}-1)}.\\
	\end{equation}
	
Using inequality (\ref{85}) in equation (\ref{82}), gives
	\begin{equation*}
	|a_{3}-\mu a^{2}_{2}|+\left( \dfrac{[2]_{p,q}^{2}([2]_{p,q}-1)b^{2}_{1}+([2]_{p,q}[2]_{p,q}-1)^{2}(b_{2}+b_{1})}{[3]_{p,q}([3]_{p,q}-1)b^{2}_{1}}-\mu\right)  |a_{2}|^{2}\leq \dfrac{b_{1}}{[3]_{p,q} ([3]_{p,q}-1)},
	\end{equation*}
	where $\rho_{3}\leq \mu < \rho_{2}$.\\ 
	
	Finally, on simplifying the above inequality, we obtain assertion  (\ref{71}).\\	
\end{proof}

For $p=1$, Theorem 2.2, gives the following corollary for the class $\mathcal{C}_{q}(\phi)$:
\begin{cor}
		Let $\phi (z)=1+b_{1}z+b_{2}z^{2} \dots $ with $b_{1}> 0$ and $b_{2}\geq 0$. Let
	\begin{align}
	\rho _{1}&=\dfrac{[2]_{q}^{2}([2]_{q}-1)b^{2}_{1}+([2]_{q}[2]_{q}-1)^{2}(b_{2}-b_{1})}{[3]_{q}([3]_{q}-1)b^{2}_{1}},\\
	\rho _{2}&=\dfrac{[2]_{q}^{2}([2]_{q}-1)b^{2}_{1}+([2]_{q}[2]_{q}-1)^{2}(b_{2}+b_{1})}{[3]_{q}([3]_{q}-1)b^{2}_{1}},\\
	\rho _{3}&=\dfrac{[2]_{q}^{2}([2]_{q}-1)b^{2}_{1}+([2]_{q}[2]_{q}-1)^{2} b_{2}}{[3]_{p,q}([3]_{q}-1)b^{2}_{1}}.
	\end{align}
	If $f$, given by equation (\ref{1}), belongs to the class $\mathcal{C}_{q}(\phi)$, then
	\begin{eqnarray}
	|a_{3}-\mu a^{2}_{2}|\leq \left\{
	\begin{array}{lll}
	\dfrac{b_{2}}{[3]_{q}([3]_{q}-1)}+\dfrac{b^{2}_{1}}{[2]_{q}-1}\left(\dfrac{1}{[3]_{q}([3]_{q}-1)}-\dfrac{\mu}{[2]^{2}_{q}([2]_{q}-1)}\right)  , &if ~~ \mu \leq \rho_{1};
	\\
	\\
	\dfrac{b_{1}}{[3]_{q}([3]_{q}-1)}, & if ~~ \rho_{1}\leq \mu \leq \rho_{2};
	\\
	\\
	\dfrac{-b_{2}}{[3]_{q}([3]_{q}-1)}-\dfrac{b^{2}_{1}}{[2]_{q}-1}\left(\dfrac{1}{[3]_{q}([3]_{q}-1)}-\dfrac{\mu}{[2]^{2}_{q}([2]_{q}-1)}\right ) , & if~~ \mu \geq \rho_{2}.
	\end{array}\right.
	\end{eqnarray}\\ 
	
	Further, if $\rho_{1}<\mu \leq \rho_{3}$, then 
	\begin{equation}
	|a_{3}-\mu a^{2}_{2}|+\dfrac{[2]^{2}_{q}([2]_{q}-1)^{2}}{[3]_{q}([3]_{q}-1)b^{2}_{1}}\left[ b_{1}-b_{2}-\dfrac{b^{2}_{1}}{[2]_{q}-1}\left( 1-\dfrac{[3]_{q}([3]_{q}-1)}{[2]^{2}_{q}([2]_{q}-1)}\mu \right)|a_{2}|^{2} \right] \leq \dfrac{b_{1}}{[3]_{q}([3]_{q}-1)}.\\
	\end{equation}
	and if 
	$\rho_{3}\leq \mu < \rho_{2}$, then 
	\begin{equation}
	|a_{3}-\mu a^{2}_{2}|+\dfrac{[2]^{2}_{q}([2]_{q}-1)^{2}}{[3]_{q}([3]_{q}-1)b^{2}_{1}}\left[ b_{1}+b_{2}+\dfrac{b^{2}_{1}}{[2]_{q}-1}\left( 1-\dfrac{[3]_{q}([3]_{q}-1)}{[2]^{2}_{q}([2]_{q}-1)}\mu \right)|a_{2}|^{2} \right] \leq \dfrac{b_{1}}{[3]_{q}([3]_{q}-1)}.\\
	\end{equation}
	
%	The result is sharp.\\
\end{cor}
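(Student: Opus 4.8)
The plan is to mirror exactly the argument used for Theorem~3.1, transporting it to the convex setting by replacing the starlike coefficient functionals with their convex analogues. The starting point is the representation already obtained in the proof of Theorem~2.2: if $f\in\mathcal{C}_{p,q}(\phi)$ then there is a Schwarz function $w$, a function $p(z)=1+c_1z+c_2z^2+\dots$ with $\Re(p(z))>0$, and equations (\ref{40}) and (\ref{42}) hold, namely
\begin{equation*}
|a_{3}-\mu a^{2}_{2}|=\dfrac{|b_{1}|}{2[3]_{p,q}([3]_{p,q}-1)}\,|c_{2}-vc^{2}_{1}|,
\end{equation*}
where $v$ is given by equation (\ref{41}). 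First I would record that the sign conditions on $v$ in Lemma~3.1 translate into threshold conditions on $\mu$: solving $v\le 0$, $0\le v\le 1$, $v\ge 1$ in (\ref{41}) gives $\mu\le\rho_1$, $\rho_1\le\mu\le\rho_2$, $\mu\ge\rho_2$ respectively, with $\rho_1,\rho_2$ as in (\ref{66})--(\ref{67}); this is the straightforward linear-in-$\mu$ rearrangement done in the proof of Theorem~3.1.

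Next I would substitute the three cases of Lemma~3.1 into (\ref{42}). For $\mu\le\rho_1$ we get $|a_3-\mu a_2^2|\le\frac{b_1}{2[3]_{p,q}([3]_{p,q}-1)}(-4v+2)$; re-expanding via (\ref{41}) and simplifying yields the first branch of (\ref{69}). For $\rho_1\le\mu\le\rho_2$ we immediately get the flat bound $\frac{b_1}{[3]_{p,q}([3]_{p,q}-1)}$, the second branch. For $\mu\ge\rho_2$ the bound $\frac{b_1}{2[3]_{p,q}([3]_{p,q}-1)}(4v-2)$ rearranges to the third branch. This establishes the piecewise estimate (\ref{69}).

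For the refined inequalities (\ref{70}) and (\ref{71}) I would use the sharpened forms (\ref{100}) and (\ref{101}) of Lemma~3.1. Assume $0<v\le\tfrac12$; rearranging (\ref{41}) shows this is equivalent to $\rho_1<\mu\le\rho_3$ with $\rho_3$ as in (\ref{68}). Then, using (\ref{38}) to write $|a_2|^2=\frac{b_1^2|c_1|^2}{4[2]_{p,q}^2([2]_{p,q}-1)^2}$, I would form $|a_3-\mu a_2^2|+(\mu-\rho_1)|a_2|^2$, substitute (\ref{42}) and the definition (\ref{41}) of $v$, and observe that the bracketed expression collapses to $\tfrac12\big(|c_2-vc_1^2|+v|c_1|^2\big)$ times $\frac{b_1}{[3]_{p,q}([3]_{p,q}-1)}$; then (\ref{100}) gives the bound $\frac{b_1}{[3]_{p,q}([3]_{p,q}-1)}$, and unwinding $\mu-\rho_1$ yields (\ref{70}). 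The case $\tfrac12\le v<1$, equivalent to $\rho_3\le\mu<\rho_2$, is entirely symmetric: one works with $|a_3-\mu a_2^2|+(\rho_2-\mu)|a_2|^2$, uses $(1-v)|c_1|^2$ in place of $v|c_1|^2$, and invokes (\ref{101}) to obtain (\ref{71}).

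The only real obstacle is bookkeeping: the convex functionals carry the extra factors $[2]_{p,q}$ and $[3]_{p,q}$ (compare the denominators $[2]_{p,q}([2]_{p,q}-1)$ and $[3]_{p,q}([3]_{p,q}-1)$ in (\ref{38})--(\ref{39}) against $[2]_{p,q}-1$ and $[3]_{p,q}-1$ in the starlike case), so the algebraic simplification of each branch and the passage between $v$-inequalities and $\mu$-inequalities must be carried out with care to land exactly the $\rho_i$ defined in (\ref{66})--(\ref{68}); there is no conceptual difficulty, and sharpness is inherited from the sharpness of Lemma~3.1 exactly as in Theorem~3.1.
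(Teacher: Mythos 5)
Your argument is mathematically sound, but it is not the route the paper takes, and strictly speaking it proves a different (more general) statement than the one asked. The paper obtains this corollary in one line: it sets $p=1$ in Theorem~3.2, so that $[n]_{p,q}$ reduces to $[n]_q$ and the classes $\mathcal{C}_{p,q}(\phi)$ reduce to $\mathcal{C}_q(\phi)$; no new estimate is derived. What you have written is essentially a re-derivation of Theorem~3.2 itself: you work throughout with $[n]_{p,q}$, the representations (\ref{38})--(\ref{42}), the parameter $v$ from (\ref{41}), the thresholds $\rho_1,\rho_2,\rho_3$ from (\ref{66})--(\ref{68}), and the case analysis plus refinements (\ref{100})--(\ref{101}) of Lemma~3.1. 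That reasoning is correct and is exactly how the paper proves the theorem, so your proof is valid once you add the (trivial) final step of specializing $p=1$, i.e.\ observing that $\mathcal{C}_q(\phi)=\mathcal{C}_{1,q}(\phi)$ and $[n]_{1,q}=[n]_q$, which converts every $\rho_i$ and every bound into the $q$-forms stated in the corollary. The trade-off is only one of economy: invoking Theorem~3.2 makes the corollary immediate, whereas your direct transport of the Lemma~3.1 machinery duplicates work already done but has the minor virtue of making the corollary self-contained. One small caveat: you close by saying sharpness is inherited, but the corollary as stated does not assert sharpness, so that remark is superfluous rather than wrong.
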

\vspace{5mm}
In the next section, we discuss some applications of the results, established in Sections 1. and 2. .
\section{Application}

\hspace{5mm}We recall that the Bernardi integral operator $\mathcal{F}_{c}$ is given by \cite{Ber}:
\begin{equation*}
\mathcal{F}_{c}(f(z))=\dfrac{1+c}{z^{c}} \int_{0}^{z}t^{c-1} f(t)dt \hspace{5mm}(f\in \mathcal{A}, \  c>-1).\\
\end{equation*}

Now, in view of above equation, we introduce the  $(p,q)$-Bernardi integral operator $\mathcal{L}(z)$ as:

\begin{equation}\label{90}
\mathcal{L}(z):=\mathcal{F}_{c,p,q}(f(z))=\dfrac{[1+c]_{p,q}}{z^{\beta}} \int_{0}^{z}t^{c-1} f(t)d_{p,q}t \hspace{5mm} c=0,1,2,3,\dots.\\
\end{equation}

Let $f\in \mathcal{A}$, then using equations (\ref{4}) and (\ref{105}), we obtain the following power series for the  function  $\mathcal{L}$ in the open unit disc $ \mathbb{U}=\left\lbrace z\in \mathbb{C}:|z|<1\right\rbrace $:
	\begin{equation}\label{91}
\mathcal{L}(z)=z+\sum_{n=2}^{\infty}\dfrac{[1+c]_{p,q}}{[n+c]_{p,q}}a_{n}z^{n} \hspace{5mm} (c=1,2,3,\dots; \  0<q<p\leq1; \  f\in \mathcal{A}).\\
\end{equation}

	It is clear that  $\mathcal{L}(z)$ is analytic in open disc $\mathbb{U}$.\\
	
We note that, by taking $p=1$ in equation (\ref{90}), we get $q$-Bernardi integral operator  \cite{noor2017q} .\\

Let
\begin{equation}\label{93}
L_{n}=\dfrac{[1+c]_{p,q}}{[n+c]_{p,q}}, \hspace{10mm} n\geq 1.\\
\end{equation}

Now, applying the Theorem 2.1 to the function $\mathcal{L}(z)$, defined by equation (\ref{91}), we get the following application of the theorem:\\

\textbf{\rom{1}.}	Let $\phi (z)=1+b_{1}z+b_{2}z^{2} \dots $, with $b_{1}\neq 0$. If $\mathcal{L}$, given by equation (\ref{91}), belongs to the class $\mathcal{S}^{*}_{p,q}(\phi)$, then 
\begin{equation*}
|a_{3}-\mu a^{2}_{2}|\leq \dfrac{|b_{1}|}{[3]_{p,q}L_{3}-1} \max \left\lbrace 1; \left|\dfrac{ b_{2}}{b_{1}}+\dfrac{b_{1}}{[2]_{p,q}L_{2}-1}\left(1-\dfrac{[3]_{p,q}L_{3}-1}{[2]_{p,q}L_{2}-1} \right) \mu \right|  \right\rbrace ,
\end{equation*}
where $L_{2}$ and $L_{3}$ are given by equation (\ref{93}), $b_{1}, b_{2}, \dots \in \mathbb{R} $, $\mu \in \mathbb{C}$, $0<q<p\leq 1$.\\

Next, applying the Theorem 2.2 to the function $\mathcal{L}(z)$, defined by equation (\ref{91}), we get the following application of the theorem:\\

\textbf{\rom{2}.}	Let $\phi (z)=1+b_{1}z+b_{2}z^{2} \dots $, with $b_{1}\neq 0$. If $\mathcal{L}$, given by equation (\ref{91}), belongs to the class $\mathcal{C}_{p,q}(\phi)$, then 
	\begin{equation*}
	|a_{3}-\mu a^{2}_{2}|\leq \dfrac{|b_{1}|}{[3]_{p,q}L_{3}([3]_{p,q}L_{3}-1)} \max \left\lbrace 1; \left|\dfrac{ b_{2}}{b_{1}}+\dfrac{b_{1}}{[2]_{p,q}-1}L_{2}\left(1-\dfrac{[3]_{p,q}L_{3}([3]_{p,q}L_{3}-1)}{[2]^{2}_{p,q}L_{2}([2]_{p,q}L_{2}-1)} \right) \mu \right|  \right\rbrace ,
	\end{equation*}
where $L_{2}$ and $L_{3}$ are given by equation (\ref{93}), $b_{1}, b_{2}, \dots \in \mathbb{R} $, $\mu \in \mathbb{C}$, $0<q<p\leq 1$.\\

Further, applying the Theorem 3.1 to the function $\mathcal{L}(z)$, defined by equation (\ref{91}), we get the following application of the theorem: \\

	\textbf{\rom{3}.}	Let $\phi (z)=1+b_{1}z+b_{2}z^{2} \dots $  with $b_{1}> 0$ and $b_{2}\geq 0$. Let
	\begin{align}
	\sigma _{1}&=\dfrac{([2]_{p,q}L_{2}-1)b^{2}_{1}+([2]_{p,q}L_{2}-1)^{2}(b_{2}-b_{1})}{([3]_{p,q}L_{3}-1)b^{2}_{1}},\nonumber\\
	\sigma _{2}&=\dfrac{([2]_{p,q}L_{2}-1)b^{2}_{1}+([2]_{p,q}L_{2}-1)^{2}(b_{2}+b_{1})}{([3]_{p,q}L_{3}-1)b^{2}_{1}},\nonumber\\
	\sigma _{3}&=\dfrac{([2]_{p,q}L_{2}-1)b^{2}_{1}+([2]_{p,q}L_{2}-1)^{2} b_{2}}{([3]_{p,q}L_{3}-1)b^{2}_{1}}.\nonumber
	\end{align}
	If $\mathcal{L}$, given by equation (\ref{91}), belongs to the class $\mathcal{S}^{*}_{p,q}(\phi)$, then
	\begin{eqnarray}
	|a_{3}-\mu a^{2}_{2}|\leq \left\{
	\begin{array}{lll}
	\dfrac{b_{2}}{[3]_{p,q}-1}+\dfrac{b^{2}_{1}}{[2]_{p,q}-1}\left(\dfrac{1}{[3]_{p,q}-1}-\dfrac{\mu}{[2]_{p,q}-1}\right)  , &if ~~ \mu \leq \sigma_{1};
	\\
	\\
	\dfrac{b_{1}}{[3]_{p,q}-1}, & if ~~ \sigma_{1}\leq \mu \leq \sigma_{2};
	\\
	\\
	\dfrac{-b_{2}}{[3]_{p,q}-1}-\dfrac{b^{2}_{1}}{[2]_{p,q}-1}\left(\dfrac{1}{[3]_{p,q}-1}-\dfrac{\mu}{[2]_{p,q}-1}\right) , & if~~ \mu \geq \sigma_{2}.
	\end{array}\right.
	\end{eqnarray}\\

	Further, if $\sigma_{1}< \mu \leq \sigma_{3}$, then 
	\begin{equation*}
	|a_{3}-\mu a^{2}_{2}|+\dfrac{([2]_{p,q}L_{2}-1)^{2}}{([3]_{p,q}L_{3}-1)b^{2}_{1}}\left[ b_{1}-b_{2}-\dfrac{b^{2}_{1}}{[2]_{p,q}L_{2}-1}\left( 1-\dfrac{[3]_{p,q}L_{3}-1}{[2]_{p,q}L_{2}-1}\mu \right)|a_{2}|^{2} \right] \leq \dfrac{b_{1}}{[3]_{p,q}L_{3}-1}.\\
	\end{equation*}
	and if 
	$\sigma_{3}\leq \mu <\sigma_{2}$, then 
	\begin{equation*}
	|a_{3}-\mu a^{2}_{2}|+\dfrac{([2]_{p,q}L_{2}-1)^{2}}{([3]_{p,q}L_{3}-1)b^{2}_{1}}\left[ b_{1}+b_{2}+\dfrac{b^{2}_{1}}{[2]_{p,q}L_{2}-1}\left( 1-\dfrac{[3]_{p,q}L_{3}-1}{[2]_{p,q}L_{2}-1}\mu \right)|a_{2}|^{2} \right] \leq \dfrac{b_{1}}{[3]_{p,q}L_{3}-1},\\
	\end{equation*}
	where $L_{2}$ and $L_{3}$ are given by equation (\ref{93}).\\

Finally, applying the Theorem 3.1 to the function $\mathcal{L}(z)$, defined by equation (\ref{91}), we get the following application of the theorem:\\

	\textbf{\rom{4}.}	Let $\phi (z)=1+b_{1}z+b_{2}z^{2} \dots $ with $b_{1}> 0$ and $b_{2}\geq 0$. Let
	\begin{align}
	\rho_{1}&=\dfrac{[2]_{p,q}^{2}([2]_{p,q}L_{2}-1)b^{2}_{1}+([2]_{p,q}L_{2}[2]_{p,q}L_{2}-1)^{2}(b_{2}-b_{1})}{[3]_{p,q}L_{3}([3]_{p,q}L_{3}-1)b^{2}_{1}},\nonumber\\
	\rho_{2}&=\dfrac{[2]_{p,q}^{2}L_{2}([2]_{p,q}L_{2}-1)b^{2}_{1}+[2]^{2}_{p,q}L_{2}([2]_{p,q}L_{2}-1)^{2}(b_{2}+b_{1})}{[3]_{p,q}L_{3}([3]_{p,q}L_{3}-1)b^{2}_{1}},\nonumber\\
	\rho_{3}&=\dfrac{[2]_{p,q}^{2}L_{2}([2]_{p,q}L_{2}-1)b^{2}_{1}+[2]^{2}_{p,q}L_{2}([2]_{p,q}L_{2}-1)^{2} b_{2}}{[3]_{p,q}L_{3}([3]_{p,q}L_{3}-1)b^{2}_{1}}.\nonumber
	\end{align}
	If $\mathcal{L}$, given by equation (\ref{91}), belongs to the class $\mathcal{C}_{p,q}(\phi)$, then
	\begin{eqnarray}
	|a_{3}-\mu a^{2}_{2}|\leq \left\{
	\begin{array}{lll}
	\dfrac{b_{2}}{[3]_{p,q}L_{3}([3]_{p,q}L_{3}-1)}+\dfrac{b^{2}_{1}}{[2]_{p,q}L_{2}-1}\left(\dfrac{1}{[3]_{p,q}L_{3}([3]_{p,q}L_{3}-1)}-\dfrac{\mu}{[2]^{2}_{p,q}L_{2}([2]_{p,q}L_{2}-1)}\right)  , &if ~~ \mu \leq \rho_{1};
	\\
	\\
	\dfrac{b_{1}}{[3]_{p,q}L_{3}([3]_{p,q}L_{3}-1)}, & if ~~ \rho_{1}\leq \mu \leq \rho_{2};
	\\
	\\
	\dfrac{-b_{2}}{[3]_{p,q}L_{3}([3]_{p,q}L_{3}-1)}-\dfrac{b^{2}_{1}}{[2]_{p,q}L_{2}-1}\left(\dfrac{1}{[3]_{p,q}L_{3}([3]_{p,q}L_{3}-1)}-\dfrac{\mu}{[2]^{2}_{p,q}L_{2}([2]_{p,q}L_{2}-1)}\right ) , & if~~ \mu \geq \rho_{2}.
	\end{array}\right.
	\end{eqnarray}\\ 
	
	Further, if $\rho_{1}< \mu \leq \rho_{3}$, then 
	\begin{equation*}
	|a_{3}-\mu a^{2}_{2}|+\dfrac{[2]^{2}_{p,q}L_{2}([2]_{p,q}L_{2}-1)^{2}}{[3]_{p,q}L_{3}([3]_{p,q}L_{3}-1)b^{2}_{1}}\left[ b_{1}-b_{2}-\dfrac{b^{2}_{1}}{[2]_{p,q}L_{2}-1}\left( 1-\dfrac{[3]_{p,q}L_{3}([3]_{p,q}L_{3}-1)}{[2]^{2}_{p,q}L_{2}([2]_{p,q}L_{2}-1)}\mu \right)|a_{2}|^{2} \right] \leq \dfrac{b_{1}}{[3]_{p,q}L_{3}([3]_{p,q}L_{3}-1)}.\\
	\end{equation*}
	and if 
	$\rho_{3}\leq \mu <\rho_{2}$, then 
	\begin{equation*}
	|a_{3}-\mu a^{2}_{2}|+\dfrac{[2]^{2}_{p,q}L_{2}([2]_{p,q}L_{2}-1)^{2}}{[3]_{p,q}L_{3}([3]_{p,q}L_{3}-1)b^{2}_{1}}\left[ b_{1}+b_{2}+\dfrac{b^{2}_{1}}{[2]_{p,q}L_{2}-1}\left( 1-\dfrac{[3]_{p,q}L_{3}([3]_{p,q}L_{3}-1)}{[2]^{2}_{p,q}L_{2}([2]_{p,q}L_{2}-1)}\mu \right)|a_{2}|^{2} \right] \leq \dfrac{b_{1}}{[3]_{p,q}L_{3}([3]_{p,q}L_{3}-1)},
	\end{equation*}
	where $L_{2}$ and $L_{3}$ are given by equation (\ref{93}).\\
	
	\section{Conclusion}
	
	In our results, by using the $(p,q)$-derivative operator, the generalized classes of  $(p,q)$-starlike and $(p,q)$-convex functions were introduced which are a generalization of starlike and convex functions. Also, the $(p,q)$-Bernardi integral operator for analytic functions were defined in the open unit disc $\mathbb{U}=\left\lbrace z\in \mathbb{C}:|z|<1\right\rbrace $ . In our main results, the Fekete-Szeg{\"o} inequalities. For the validity of our results can be applicable for our introduced $(p,q)$-Bernardi integral operator. Moreover, Some special cases of the results were  established . Further, certain applications of the main results for the $(p,q)$-starlike and $(p,q)$-convex functions were obtained by applying the  $(p,q)$-Bernardi integral operator.

 \end{document}